\date{\today}
\newtheorem{theorem}{Theorem}[section]
\newtheorem{proposition}[theorem]{Proposition}
\newtheorem{lemma}[theorem]{Lemma}
\newtheorem{definition}[theorem]{Definition}
\newtheorem{remark}[theorem]{Remark}
\def\be#1 {\begin{equation} \label{#1}}
\newcommand{\ee}{\end{equation}}
\def\sqw{\hbox{\rlap{\leavevmode\raise.3ex\hbox{$\sqcap$}}$%
\sqcup$}}
\def\findem{\ifmmode\sqw\else{\ifhmode\unskip\fi\nobreak\hfil
\penalty50\hskip1em\null\nobreak\hfil\sqw
\parfillskip=0pt\finalhyphendemerits=0\endgraf}\fi}
\newcommand{\R}{{\mathbb {R}}}
\newcommand{\Q}{{\mathbb Q}}
\newcommand{\N}{{\mathbb N}}
\newcommand{\Z}{{\mathbb Z}}
\newcommand{\C}{{\mathbb C}}
\newcommand{\K}{\mathcal K}
\newcommand{\g}{\mathfrak g}
\newcommand{\M}{\mathcal M}
\newcommand{\G}{\mathcal G}
\newcommand{\B}{\mathcal B}
\newcommand{\supp}{\operatorname{supp}}
 \author{Surjeet Singh Choudhary, Saurabh Shrivastava}
\address [Surjeet Singh Choudhary, Saurabh Shrivastava]{
Department of Mathematics\\
	Indian Institute Science Education and Research Bhopal\\
	Bhopal-462066, India}
\email{surjeet19@iiserb.ac.in, saurabhk@iiserb.ac.in}
\keywords{Bochner-Riesz means,  Bilinear multiplier operators, maximal functions, square functions}
\subjclass[2010]{42B25}
\begin{document}
\title[On the bilinear Bochner-Riesz problem at critical index]{On the bilinear Bochner-Riesz problem at critical index}
	\begin{abstract} In this paper we study maximal and square functions associated with bilinear Bochner-Riesz means at the critical index. In particular, we prove that they satisfy weighted estimates from $L^{p_1}(w_1)\times L^{p_2}(w_2)\rightarrow L^p(v_w)$ for bilinear weights $(w_1,w_2)\in A_{\vec{P}}$ where $p_1,p_2>1$ and $\frac{1}{p_1}+\frac{1}{p_2}=\frac{1}{p}$. Also, we show that both the operators fail to satisfy weak-type estimates at the end-point $(1,1,\frac{1}{2})$. 
	\end{abstract}	
	\maketitle
	\tableofcontents
\section{Introduction}\label{sec:intro}
Let $\alpha>0$ and consider the bilinear Bochner-Riesz mean defined by
\begin{eqnarray*}\label{def:bbr}\B^{\alpha}_R(f,g)(x)=\int_{\R^{n}}\int_{\R^{n}}\left(1-\frac{|\xi|^2+|\eta|^2}{R^2}\right)^{\alpha}_{+}\hat{f}(\xi)\hat{g}(\eta)e^{2\pi ix\cdot(\xi+\eta)}d\xi d\eta,
\end{eqnarray*}
where $R>0$ and $f,g\in \mathcal{S}(\R^n), n\geq 1$. Here $\hat f$ denotes the Fourier transform of $f$ given by $\hat{f}(\xi)=\int_{\R^n}f(x)e^{-2\pi ix.\xi}dx$ and $\mathcal{S}(\R^n)$ denotes the space of Schwartz class functions. 

We refer to ~\cite{BGSY, JLV, JS, JSK, LW} for the study of $L^p$ boundedness properties of the bilinear Bochner-Riesz means.  In this paper we are concerned with the maximal and square functions associated with the bilinear Bochner-Riesz means. The maximal function associated with the bilinear Bochner-Riesz means $\B^{\alpha}_R(f,g)(x)$ is defined by 
$$\B^{\alpha}_*(f,g)(x)=\sup_{R>0} |\B^{\alpha}_R(f,g)(x)|.$$
The maximal function $\B^{\alpha}_*(f,g)(x)$ plays a key role in addressing the issue of almost everywhere convergence of the bilinear Bochner-Riesz means $\B_R^{\alpha}(f,g)(x)$ as $R\rightarrow \infty$.  We refer to~\cite{JL, JS} for recent results on $L^p$ boundedness of the maximal function $\B^{\alpha}_*(f,g)$ for a wide range of $\alpha$ and exponents $p_1,p_2, p$. 

The bilinear analogue of Stein's square function for Bochner-Riesz means is recently introduced and studied in~\cite{CKSS}. This is defined by 
\begin{eqnarray*}
	\G^{\alpha}(f,g)(x)&:=&\left(\int_0^{\infty}|\frac{\partial}{\partial R}\mathcal{B}_R^{\alpha+1}(f,g)(x)|^2 RdR \right)^{1/2}\\
	&=&\left(\int_{0}^{\infty}|\mathcal K^{\alpha}_R\ast (f\otimes g)(x,x)|^2\frac{dR}{R}\right)^{1/2},
\end{eqnarray*}
where $\widehat{\mathcal K^{\alpha}_R}(\xi,\eta)=2(\alpha+1)\frac{|\xi|^2+|\eta|^2}{R^2}
\left(1-\frac{|\xi|^2+|\eta|^2}{R^2}\right)^{\alpha}_+$ and $\otimes$ denotes the tensor product. 

Note that in the spatial variables the kernel (in the sense of vector-valued operators) of $\G^{\alpha}$ is given by 
$${\mathcal K^{\alpha}_R}(y_1,y_2) = c_{n+\alpha}R^{2n-2}\Delta\left(\frac{ J_{\alpha+n} (|(Ry_1,Ry_2)|)} {|(Ry_1,Ry_2)|^{\alpha+n}}\right),~y_1,y_2\in \R^n.$$
Here $J_{\alpha+n}$ denotes the Bessel function of order $\alpha+n$. 

The index $\alpha=n-\frac{1}{2}$ is called the critical index for the bilinear Bochner-Riesz problem.  Motivated by the problem of linear Bochner-Riesz means at the critical index (which is $\frac{n-1}{2}$ for the linear case) and recent developments in the direction of bilinear Bochner-Riesz problem, see for example \cite{JS,JL,CKSS}, in this paper we investigate weighted boundedness of $\B^{n-\frac{1}{2}}_*$ and $\G^{n-\frac{1}{2}}$. Also, the issue of end-point boundedness for both the operators  is addressed. We invite the reader to \cite{Christ,St,SW,ana} for results on the linear Bochner-Riesz problem at critical index. 

We need to introduce some notation in order to state the results. Let $1\leqslant p_{1}, p_{2}< \infty$ and $p$ be such that $
\frac{1}{p}=\frac{1}{p_{1}}+\frac{1}{p_{2}}.$
\begin{definition}\cite[Definition 3.5]{Ler1} (Bilinear weights)
	Let $\vec{p}=(p_{1},p_{2})$. For a given pair of weights $\vec{w}=(w_{1},w_{2})$, set $
	v_w:=\prod_{i=1}^{2} w_{i}^{p/p_{i}}$.
	We say that $\vec{w}\in A_{\vec{P}}$ if 
	\begin{equation*}
		%\label{mulap}
		[\vec{w}]_{A_{\vec{P}}}:= \sup_{Q} \Big(\frac{1}{|Q|}\int_{Q}v_w\, dx \Big) \prod_{j=1}^{2}\Big(\frac{1}{|Q|}\int_{Q} w_{j}^{1-p'_{j}} \,dx\Big) ^{p/{p_{j}^\prime}}  <\infty.
	\end{equation*} 
	When $p_j=1$, $\big(\frac{1}{|Q|}\int_Qw_j^{1-p'_j}\big)^{1/p'_j}$ is understood as $(\inf_Q w_j)^{-1}$. Here $Q$ denotes a cube in $\R^n$ with sides parallel to coordinate axes. The quantity $[\vec{w}]_{A_{\vec{P}}}$ is referred to as the bilinear $A_{\vec{P}}$ characteristic of the bilinear weight 
	$\vec{w}$. 
\end{definition}

The following are the main results of this paper. 
\begin{theorem} \label{mainthm} Let $T=\mathcal {B}_*^{n-\frac{1}{2}}~~\text{or}~\G^{n-\frac{1}{2}}$. Then $T$ is bounded from  $L^{p_{1}}(\omega_{1})\times L^{p_{2}}(\omega_{2})\rightarrow L^{p}(v_{\omega})$ for all bilinear weights $\vec{\omega}\in A_{\vec{P}}$ with $1<p_{1}, p_{2} \leq \infty$ and $\frac{1}{p_{1}}+\frac{1}{p_{2}}=\frac{1}{p}$. 
\end{theorem}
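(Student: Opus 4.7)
The plan is to reduce both $\B_*^{n-\frac12}$ and $\G^{n-\frac12}$ to the weighted $A_{\vec P}$ theory for bilinear Calder\'on-Zygmund (CZ) operators via a decomposition of the Bochner-Riesz multiplier in the distance to the critical sphere $\{|\xi|^2+|\eta|^2=R^2\}\subset\R^{2n}$. Choose a smooth dyadic partition of unity $\sum_{k\geq 0}\phi_k(t)=1$ with $\phi_k$ supported in $\{1-2^{2-k}\leq t\leq 1-2^{-k-1}\}$ for $k\geq 1$, and write
\[
\left(1-\frac{|\xi|^2+|\eta|^2}{R^2}\right)_+^{n-\frac12}=\sum_{k\geq 0}2^{-k(n-\frac12)}\,m_{R,k}(\xi,\eta),
\]
where each $m_{R,k}$ is a smooth $O(1)$ bump supported in an annulus of radial thickness $\approx R\,2^{-k}$ around the critical sphere. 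Stationary phase applied to its inverse Fourier transform yields, for the spatial kernel $\K_{R,k}$ and any multi-index $\beta$, the estimate
\[
|\partial^\beta \K_{R,k}(y_1,y_2)|\lesssim_{N,\beta}\frac{R^{2n+|\beta|}\,2^{-k(2n-1)/2}}{(1+R\,2^{-k}|(y_1,y_2)|)^N},\qquad(y_1,y_2)\in\R^{2n}.
\]
Hence each $k$-piece, viewed as a bilinear operator on $\R^n\times\R^n$ by restriction to the diagonal, is uniformly in $R$ a bilinear CZ kernel whose constants are compatible with the $A_{\vec P}$ weighted theory of Lerner--Ombrosi--P\'erez--Torres--Trujillo-Gonz\'alez.

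For $\G^{n-\frac12}$, Minkowski's inequality inside the $L^2(\frac{dR}{R})$ norm gives
\[
\G^{n-\frac12}(f,g)(x)\leq\sum_{k\geq 0}2^{-k(n-\frac12)}\left(\int_0^\infty|\widetilde{\K}_{R,k}\ast(f\otimes g)(x,x)|^2\frac{dR}{R}\right)^{1/2},
\]
with $\widetilde\K_{R,k}$ the analogous decomposition of the Stein-type kernel $\K_R^{n-\frac12}$; each summand is a vector-valued bilinear CZ operator and is $A_{\vec P}$-bounded. For $\B_*^{n-\frac12}$ I would use the Bochner-Riesz recursion
\[
\B_R^{n-\frac12}(f,g)=\B_R^{n+\frac12}(f,g)+\frac{R}{2n+1}\frac{\partial}{\partial R}\B_R^{n+\frac12}(f,g),
\]
so that the maximal function splits as $|\B_R^{n-\frac12}(f,g)|\leq |\B_R^{n+\frac12}(f,g)|+\frac{R}{2n+1}|\partial_R\B_R^{n+\frac12}(f,g)|$. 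The first piece is sub-critical and is pointwise dominated by the bilinear Hardy-Littlewood maximal operator $\M$, which is $A_{\vec P}$-bounded. The second piece is handled by the Kolmogorov-Stein inequality
\[
\sup_R|h(R)|^2\lesssim \|h\|_{L^2(\frac{dR}{R})}\cdot\|R\partial_R h\|_{L^2(\frac{dR}{R})},
\]
applied to $h(R)=\B_R^{n+\frac12}(f,g)(x)$, reducing the supremum to weighted square-function estimates for $\G^{n-\frac12}$ and $\G^{n+\frac12}$ already obtained in the first step.

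The main obstacle is absorbing the borderline summation at the critical index. The per-piece bilinear CZ constant for $m_{R,k}$ grows like a positive power of $2^k$, since a single derivative on the oscillating Bessel factor $J_{2n-\frac12}$ costs the full scale $R\,2^{-k}$; the compensating factor $2^{-k(n-\frac12)}$ closes the geometric series when $n\geq 2$, but in dimension $n=1$ the sum is only logarithmic in $k$. To obtain a uniform argument across all $n\geq 1$, the $k$-sum must be organized as a Littlewood-Paley square function in the dilation variable $R$ at fixed $k$, exploiting the $L^2(\frac{dR}{R})$-orthogonality of the annular pieces $m_{R,k}$ across varying $R$ to convert the per-piece $k$-loss into a $k$-uniform bound. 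This orthogonality step, together with the vector-valued extension of the bilinear $A_{\vec P}$ theory, is the technical heart of the argument; once installed it delivers the weighted bounds for $\G^{n-\frac12}$ directly, and the Kolmogorov-Stein transference propagates them to $\B_*^{n-\frac12}$.
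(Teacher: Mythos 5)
Your approach (dyadic decomposition around the critical sphere, bilinear Calder\'on--Zygmund theory, orthogonality in $R$ to absorb the borderline $k$-sum, then Kolmogorov--Stein to pass from the square function to the maximal operator) is genuinely different from the paper's, which uses none of these ingredients. The paper first reduces to $\vec P=(2,2)$ via multilinear extrapolation \cite{KJS}, then runs Stein's complex interpolation on the analytic family $z\mapsto \mathcal B^{(1-z)\epsilon_1+z(n-\frac12+\epsilon_2)}_{R(x)}(f^z_{\delta_0},g^z_{\delta_0})\,v_N^z e^{Az^2}$, matching the unweighted $L^2\times L^2\to L^1$ bound for small $\mathrm{Re}(z)$ (Lemma~\ref{keylem1}, controlling $\partial_z\mathcal B^z_*$) against the already-known weighted supercritical estimate (Lemma~\ref{weighted}), and finally uses reverse H\"older (Lemma~\ref{wei1}) to land exactly at $n-\frac12$. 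That route works uniformly in $n\geq1$.

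Unfortunately your proposal has two genuine gaps. First, the Kolmogorov--Stein step does not close at the critical index. Writing $\B_R^{n-\frac12}=\B_R^{n+\frac12}+\tfrac{1}{2n+1}R\partial_R\B_R^{n+\frac12}$ and applying the inequality $\sup_R|h(R)|^2\lesssim\|h\|_{L^2(dR/R)}\|R\partial_R h\|_{L^2(dR/R)}$ to $h(R)=R\partial_R\B_R^{n+\frac12}(f,g)(x)=c\,\mathcal K^{n-\frac12}_R\ast(f\otimes g)(x,x)$ gives $\|h\|_{L^2(dR/R)}=\G^{n-\frac12}(f,g)(x)$, but the multiplier of $R\partial_R h$ contains the factor $(1-\frac{|\xi|^2+|\eta|^2}{R^2})_+^{n-\frac32}$, so the second factor is the \emph{sub}-critical square function $\G^{n-\frac32}$, not $\G^{n+\frac12}$. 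This is outside the range you establish, and in dimension $n=1$ it corresponds to the exponent $-\tfrac12$ where the square function fails even unweighted on $L^2\times L^2$. (Applying the inequality directly to $h(R)=\B_R^{n+\frac12}(f,g)(x)$ is worse: $\|h\|_{L^2(dR/R)}$ diverges because $h(R)\to f(x)g(x)$ as $R\to\infty$.) Second, the step you call ``the technical heart'' --- converting the per-piece loss in $k$ into a $k$-uniform bound via $L^2(dR/R)$-orthogonality, and extending the bilinear $A_{\vec P}$ theory to the requisite vector-valued setting --- is stated as necessary but not supplied; since at the critical index the series $\sum_k 2^{-k(n-\frac12)}\cdot(\text{per-piece CZ constant})$ is precisely borderline (you note it is already only logarithmic for $n=1$), this is exactly where the proof must be made, and asserting orthogonality does not by itself produce weighted $L^{p_1}(\omega_1)\times L^{p_2}(\omega_2)\to L^p(v_\omega)$ estimates, where Plancherel-type arguments are unavailable.
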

Further, we show that both the operators fail to satisfy weak-type estimates at the end-point $(1,1,\frac{1}{2})$. 
\begin{proposition}\label{endpoint:sf}
The bilinear square function $\G^{n-\frac{1}{2}}$ is unbounded from $L^1(\R^n)\times L^1(\R^n)$ to $L^{\frac{1}{2},\infty}(\R^n), n\geq 1$.
\end{proposition}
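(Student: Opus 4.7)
The plan is to produce a sequence of $L^{1}$-normalised inputs on which the weak-type quasi-norm blows up. Let $\phi$ be a smooth radial bump on $\R^{n}$ supported in the unit ball with $\int\phi=1$, and set $f_\delta(y)=g_\delta(y)=\delta^{-n}\phi(y/\delta)$, so that $\|f_\delta\|_{L^{1}}=\|g_\delta\|_{L^{1}}=1$ for every $\delta\in(0,1)$. A weak-type bound $L^{1}\times L^{1}\to L^{1/2,\infty}$ would force $\|\G^{n-\frac{1}{2}}(f_\delta,g_\delta)\|_{L^{1/2,\infty}}\lesssim 1$ uniformly in $\delta$; I will contradict this by showing that the left-hand side is $\gtrsim(\log(1/\delta))^{1/2}$ as $\delta\to 0^{+}$.

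The core step is a pointwise lower bound for $\G^{n-\frac{1}{2}}(f_\delta,g_\delta)(x)$ in the regime $\delta\ll|x|\leq 1$. Because $\widehat{f_\delta\otimes g_\delta}(\xi,\eta)=\widehat\phi(\delta\xi)\widehat\phi(\delta\eta)=1+O((\delta R)^{2})$ on the support of $\widehat{\mathcal K_R^{n-\frac{1}{2}}}$ whenever $R\ll 1/\delta$, one has
$$\mathcal K_R^{n-\frac{1}{2}}\ast(f_\delta\otimes g_\delta)(x,x)=\mathcal K_R^{n-\frac{1}{2}}(x,x)+\text{error}$$
in this range. For the main term, I use the paper's Bessel formula with $\nu=2n-\frac{1}{2}$ together with the large-argument asymptotic $J_\nu(t)=\sqrt{2/(\pi t)}\cos(t-\nu\pi/2-\pi/4)+O(t^{-3/2})$; the two radial derivatives of the cosine in the $\R^{2n}$ Laplacian pull out a factor $R^{2}$ that cancels the prefactor $R^{2n-2}/(R\rho)^{2n}$ and give
$$\mathcal K_R^{n-\frac{1}{2}}(x,x)=\frac{c\cos(R|x|\sqrt{2}-\varphi_{n})}{|x|^{2n}}+O\Bigl(\frac{1}{R|x|^{2n+1}}\Bigr),\qquad R|x|\gg 1.$$
Squaring, integrating against $dR/R$ over $1/|x|\leq R\leq c/\delta$, and substituting $u=R|x|\sqrt{2}$ converts the principal contribution to $|x|^{-4n}\int_{\sqrt{2}}^{|x|\sqrt{2}/\delta}\cos^{2}(u-\varphi_{n})\,\frac{du}{u}$, which by $\cos^{2}=\tfrac12(1+\cos 2u)$ is at least $c\log(|x|/\delta)|x|^{-4n}$. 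Hence
$$\G^{n-\frac{1}{2}}(f_\delta,g_\delta)(x)\;\gtrsim\;\frac{(\log(|x|/\delta))^{1/2}}{|x|^{2n}}.$$

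To finish, restrict to the annulus $A=\{x\in\R^{n}:1/4\leq|x|\leq 1/2\}$; then $|A|\sim 1$ and $\G^{n-\frac{1}{2}}(f_\delta,g_\delta)(x)\gtrsim(\log(1/\delta))^{1/2}$ on $A$. Taking the threshold $t=c(\log(1/\delta))^{1/2}$ with $c>0$ small,
$$\|\G^{n-\frac{1}{2}}(f_\delta,g_\delta)\|_{L^{1/2,\infty}}\;\geq\;t\,\bigl|\{x:\G^{n-\frac{1}{2}}(f_\delta,g_\delta)(x)>t\}\bigr|^{2}\;\gtrsim\;(\log(1/\delta))^{1/2}\longrightarrow\infty,$$
contradicting the uniform bound $\|\G^{n-\frac{1}{2}}(f_\delta,g_\delta)\|_{L^{1/2,\infty}}\lesssim\|f_\delta\|_{L^{1}}\|g_\delta\|_{L^{1}}=1$. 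The main technical obstacle lies in making the oscillatory lower bound rigorous: one must control the Bessel-asymptotic remainder uniformly in $R$ over the logarithmic window $[1/|x|,c/\delta]$, justify the smoothing error $O((\delta R)^{2})$ from convolution with $f_\delta\otimes g_\delta$, and verify that neither the subleading Bessel terms nor the smoothing can cancel the $\cos^{2}$ contribution responsible for the logarithmic divergence. These issues are standard but delicate, and can be handled by tracking the error terms in the Bessel expansion together with a dyadic partition of the $R$ interval.
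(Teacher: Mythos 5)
Your proposal follows essentially the same strategy as the paper's: feed in an approximate identity, isolate the on-diagonal kernel $\mathcal K_R^{n-\frac12}(x,x)$, apply the Bessel asymptotic to see an oscillating term $\cos(2\pi R|(x,x)|+n\pi)/|(x,x)|^{2n}$, and extract a logarithmic divergence from $\int\cos^2(\cdot)\,dR/R$ on an annulus $|x|\sim 1$. The conclusion $\|\G^{n-\frac12}\|_{L^{1/2,\infty}}\gtrsim(\log)^{1/2}\to\infty$ is what kills the weak-type bound, and your exponent $\tfrac12$ on the logarithm is the correct one (the paper's displayed $\log N$ in the last chain of inequalities is a small slip; the square root should be there since only $\int_1^N\cos^2(\cdot)\,dt/t$ itself is $\sim\log N$).

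The one genuine difference is the choice of test function, and it matters for rigor. The paper chooses $\psi$ with $\widehat\psi$ compactly supported in $B(0,2)$ and equal to $1$ on $B(0,1)$, so that $\widehat{\psi_N}(\xi)\widehat{\psi_N}(\eta)\equiv 1$ on the entire support of $\widehat{\mathcal K_R^{n-\frac12}}$ for all $R\le N$. This makes the reduction $\mathcal K_R^{n-\frac12}\ast(\psi_N\otimes\psi_N)(x,x)=\mathcal K_R^{n-\frac12}(x,x)$ an exact identity on the whole range of $R$, and there is no error term at all. You instead take $\phi$ compactly supported in physical space, so $\widehat{f_\delta}=\widehat\phi(\delta\cdot)$ is only approximately $1$. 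Your quoted Fourier-side control $\widehat\phi(\delta\xi)\widehat\phi(\delta\eta)-1=O((\delta R)^2)$ on $\operatorname{supp}\widehat{\mathcal K_R^{n-\frac12}}$ is correct pointwise, but fed through the trivial bound (sup times measure of the support) it gives an error $O(\delta^2R^{2n+2})$, which swamps the $O(|x|^{-2n})$ main term already for $R\gtrsim\delta^{-1/(n+1)}$ and so cannot justify integrating out to $R\sim 1/\delta$. The estimate you actually need is the physical-space one: $\mathcal K_R\ast(f_\delta\otimes g_\delta)(x,x)-\mathcal K_R(x,x)=\int[\mathcal K_R((x,x)-(y,z))-\mathcal K_R(x,x)]\,(f_\delta\otimes g_\delta)(y,z)\,dy\,dz$, and since $\mathcal K_R$ varies on scale $1/R$ near $|(x,x)|\sim1$ with amplitude $\sim|x|^{-2n}$, the oscillation over a $\delta$-ball is $O(\delta R\,|x|^{-2n})$. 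With that, $\int_1^{c/\delta}(\delta R)^2\,dR/R=O(c^2)$ is harmless for small $c$, and your argument closes. So the route is sound, but the error control needs to be done on the kernel side, not on the multiplier side; the paper's Fourier-supported bump is a cleaner device that makes the whole issue disappear.
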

In the case of maximal function we get a stronger result at the end-point $(1,1,\frac{1}{2})$. First, observe that in view of bilinear transference principle it is sufficient to work with the operator defined for functions on the unit cube $Q_n=[-\frac{1}{2},\frac{1}{2})^n$. Let us use the same notation to denote the operator in the periodic case as in the case of $\R^n$. We have the following. 
\begin{theorem}\label{div}
Let $n\geq 1$. There exists an integrable function $f$ on $Q_n$ and a positive measure set $E$ of $Q_n$ such that
$$\limsup_{R\to\infty}|\B^{{n-\frac{1}{2}}}_R(f,f)(x)|=\infty$$
for almost every $x\in E$.
\end{theorem}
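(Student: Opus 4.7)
My plan is to reduce Theorem \ref{div} to a Kolmogorov-type construction in dimension $2n$, exploiting the coincidence of indices $n-\tfrac{1}{2}=\tfrac{2n-1}{2}$. Writing $(f\otimes f)(y_1,y_2):=f(y_1)f(y_2)$ and letting $S_R^{\beta}$ denote the linear Bochner-Riesz operator of index $\beta$ in $\R^{2n}$, a direct Fourier computation yields the identity
\[
\B_R^{n-\frac{1}{2}}(f,f)(x)\;=\;S_R^{\frac{2n-1}{2}}(f\otimes f)(x,x).
\]
Thus the bilinear critical index matches the linear critical index in $\R^{2n}$, a regime at which the Bochner-Riesz kernel fails to be in $L^1(\R^{2n})$. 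After bilinear transference, the task becomes producing $f\in L^1(Q_n)$ and a set $E\subset Q_n$ of positive measure with $\limsup_R|S_R^{\frac{2n-1}{2}}(f\otimes f)(x,x)|=\infty$ almost everywhere on $E$. The main conceptual obstacle is that classical a.e.\ divergence results for linear Bochner-Riesz at critical index (Stein-type constructions in $\T^{2n}$, cf.\ \cite{St,ana}) give a bad set in $\T^{2n}$ which is not guaranteed to meet the diagonal $\{(x,x)\}$, a set of $2n$-dimensional measure zero, and they apply to arbitrary $L^1$ inputs rather than to the restricted tensor-product inputs $f\otimes f$. The construction must therefore be engineered by hand for the bilinear diagonal.

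I would carry it out in three steps. \emph{(Atoms.)} For each large $N$, construct a trigonometric polynomial $f_N$ on $\T^n$ with $\|f_N\|_{L^1}\le 1$, a radius $R_N>0$, and a measurable set $E_N\subset Q_n$ with $|E_N|\ge c_0>0$, such that $|\B_{R_N}^{n-\frac{1}{2}}(f_N,f_N)(x)|\ge A_N$ on $E_N$, where $A_N\to\infty$. One places $\widehat{f_N}$ on a lacunary lattice set in a thin annulus of radius $\sim R_N/\sqrt{2}$, so that $\widehat{f_N\otimes f_N}$ concentrates precisely where the multiplier $\bigl(1-(|\xi|^2+|\eta|^2)/R_N^{2}\bigr)^{n-\frac{1}{2}}_{+}$ retains mass; a stationary-phase analysis of the Bessel representation of the critical kernel then produces macroscopic spikes of the bilinear operator despite the $L^1$ normalisation. \emph{(Gluing.)} Set $f(x):=\sum_{N\ge 1}c_N\,f_N(x-z_N)$ with $c_N>0$, $\sum_N c_N<\infty$ (so $f\in L^1(Q_n)$), $c_N^{2}A_N\to\infty$, and translates $z_N\in Q_n$ chosen randomly (uniform i.i.d.) or via a Baire-category argument so as to disperse the spike sets $z_N+E_N$ throughout $Q_n$. \emph{(Borel-Cantelli.)} Expanding by bilinearity, the diagonal contribution $c_N^{2}\,\B_{R_N}^{n-\frac{1}{2}}(f_N(\cdot-z_N),f_N(\cdot-z_N))$ dominates every cross term outside a small exceptional set (thanks to dispersion of the $z_N$'s and the lacunary separation of the $R_N$'s), yielding a spike of height $\ge c_N^{2}A_N/2$ on $z_N+E_N$. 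A Borel-Cantelli argument applied to these spike events then produces a positive-measure $E\subset Q_n$ on which $\limsup_R|\B_R^{n-\frac{1}{2}}(f,f)(x)|=\infty$.

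The hardest step is the atom construction. It requires a delicate lattice-point and stationary-phase estimate at the critical bilinear index showing that some $L^1$-normalised trigonometric polynomial $f_N$ is amplified by the quadratic operator $\B_{R_N}^{n-\frac{1}{2}}(\cdot,\cdot)$ to a macroscopic spike on a set of uniformly positive measure. In spirit this mirrors the classical Kolmogorov construction for Fourier partial sums and its linear Bochner-Riesz analogues (cf.\ \cite{Christ,St,SW,ana}), but the quadratic dependence on $f$ through $f\otimes f$ forces careful control of the cross interactions between different Fourier atoms inside $f_N$; this is where the bilinear structure truly differs from the linear one. The subsequent gluing and Borel-Cantelli steps, while non-trivial because of the cross terms between distinct $f_N$'s, are relatively standard once the atoms are secured.
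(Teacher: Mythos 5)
Your plan is genuinely different from the paper's proof, but it leaves the decisive step — the atom construction — as a wish list rather than an argument, and that is exactly where the bilinear structure makes life hard. You yourself flag the atom construction as the hardest step, but "place $\widehat{f_N}$ on a lacunary lattice set in a thin annulus and run stationary phase" is not yet a construction: for the bilinear multiplier $(1-(|\xi|^2+|\eta|^2)/R^2)^{n-1/2}_+$, the tensor $\widehat{f_N}\otimes\widehat{f_N}$ sits on a \emph{product} of annuli, not on a thin annulus in $\R^{2n}$, so the mass is smeared along the diagonal sphere in a way quite different from the linear Kolmogorov model, and the quadratic cross-interactions between the $\sim\!R_N^{n-1}$ lattice points inside a single $f_N$ are unbounded in number. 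It is not at all evident that the coherent part survives with a spike of size $A_N\to\infty$ against an $L^1$-normalized $f_N$. Likewise, the Borel--Cantelli gluing needs the spike events $\{x\in z_N+E_N\}$ to be quasi-independent and needs the bilinear cross terms $c_Nc_M\,\B^{n-1/2}_{R_N}(f_N(\cdot-z_N),f_M(\cdot-z_M))$, $N\neq M$, to be small uniformly; for a bilinear operator there are $O(K^2)$ such terms at level $K$, and nothing in the sketch prevents them from swamping the diagonal contribution $c_N^2 A_N$ when the translates are merely "dispersed."

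By contrast, the paper bypasses all of this by working backward from the Dirac mass. It first proves (Lemma~\ref{Lemma:Dirac}) that $\sup_R|K_R^{n-1/2}(x,x)|=\infty$ for a.e.\ $x$, using Poisson summation plus the classical Bochner/Stein--Weiss observation that the distances $\{|(x,x)-m|:m\in\Z^{2n}\}$ together with $1$ are $\Q$-linearly independent for a.e.\ $x$, so that a Fej\'er-type average over $R$ isolates the resonant frequencies and contradicts boundedness against $\sum_j\lambda_j^{-2n}=\infty$. Then, in two separate inductive steps, it replaces each $\delta_0$ by the \emph{single} $L^1$ function $f=\sum_s 2^{-s}(\phi_{\epsilon_s}-\phi_{\delta_s})$; the telescoping form, the Lipschitz bound on $\Phi$, and the Hardy--Littlewood maximal function in Step II give explicit, summable control of the tails and cross terms, so no independence or stationary-phase argument is needed. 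If you want to pursue your route, the two points to resolve are: (i) an actual construction of an $L^1$-normalized $f_N$ with a macroscopic bilinear spike at the critical index, which appears to be an open computation; and (ii) a bound on the off-diagonal terms $\B^{n-1/2}_{R_N}(f_N,f_M)$ over \emph{all} pairs $N\neq M$, not just a fixed-$N$ exceptional set.
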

In particular, we get that $\B_*^{n-\frac{1}{2}}$ is unbounded from $L^1(\R^n)\times L^1(\R^n)$ to $L^{\frac{1}{2},\infty}(\R^n), n\geq 1$. These results give us a complete picture of $L^p$ boundedness properties of operators $\B_*^{n-\frac{1}{2}}$ and $\G^{n-\frac{1}{2}}$. 

\subsection*{Organization of the paper} In Section~\ref{sec:mfexam} we prove Theorem~\ref{div}. The proof of Theorem~\ref{mainthm} establishing the weighted estimates for $\B^{n-\frac{1}{2}}_*$ and $\G^{n-\frac{1}{2}}$ is presented in~sections~\ref{max:sec} and \ref{sec:sqr} respectively. The issue of end-point isse for $\G^{n-\frac{1}{2}}$ is discussed in Section~\ref{sec:sqrexam}. 
\section{End-point estimates for the maximal function  \texorpdfstring{$\B^{n-\frac{1}{2}}_*$}{B}}\label{sec:mfexam}
In this section we prove Theorem~\ref{div}. We will make use of the ideas presented in [\cite{Bochner}, \cite{SW} page 267] where the corresponding result is proved for the maximal Bochner-Riesz function in the linear case.  We exploit their method and make suitable modifications to it to address the bilinear problem. 
%%%%%%%%%%%%%

The main idea is to estimate the maximal function acting on $L^1$ functions which peak at the origin. This is verified for the Dirac mass first. More precisely, first we show that $\B^{n-\frac{1}{2}}_R(\delta_0,\delta_0)$, as $R\rightarrow \infty$ becomes unbounded  for almost all $x\in Q_n$, where $\delta_0$ is the Dirac mass at the origin. This is proved in Lemma~\ref{Lemma:Dirac}. Later, we complete the proof in two steps. In the first step, with the help of the estimate~\eqref{Lemma:Dirac}, we replace one of the Dirac masses by a suitable $L^1$ function on $Q_n$. This step is then used to replace the other Dirac mass by the same $L^1$ function to achieve the desired result. 
\begin{lemma}\label{Lemma:Dirac}
Let $\delta_0$ be the Dirac mass at the origin in $\R^n, n\geq 1$. Then
\begin{eqnarray}\label{Dirac} \limsup_{R\to\infty}|\B^{n-\frac{1}{2}}_R(\delta_0,\delta_0)(x)|=\infty~
\end{eqnarray}
for almost every $x\in Q_n$. 
\end{lemma}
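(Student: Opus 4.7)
The plan is to view $\B^{n-\frac{1}{2}}_R(\delta_0,\delta_0)(x)$ as a linear Bochner--Riesz quantity in $\R^{2n}$ at the critical index and then run a Bochner--Stein--Weiss style divergence argument. Since $\widehat{\delta_0\otimes\delta_0}\equiv 1$, in the periodic setting on $Q_n$ (arranged via the bilinear transference principle noted just before Theorem~\ref{div}) we have the Fourier--series representation
\[
\B^{n-\frac{1}{2}}_R(\delta_0,\delta_0)(x)=\sum_{s\in\Z^{2n}}\Bigl(1-\tfrac{|s|^2}{R^2}\Bigr)^{n-\frac12}_+ e^{2\pi i(x,x)\cdot s},
\]
that is, the $(n-\tfrac12)$-Bochner--Riesz partial sum on $\T^{2n}$ of the periodic Dirac mass at the origin, evaluated at the diagonal point $(x,x)$. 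The order $n-\tfrac12=(2n-1)/2$ is precisely the critical index in dimension $2n$. By Poisson summation the Fourier series converts to the spatial lattice sum
\[
\B^{n-\frac{1}{2}}_R(\delta_0,\delta_0)(x)=\sum_{m\in\Z^{2n}}K_R^{n-\frac12}\bigl((x,x)-m\bigr),\qquad K_R^{n-\frac12}(y)=c_nR^{2n}\frac{J_{2n-\frac12}(2\pi R|y|)}{(R|y|)^{2n-\frac12}}.
\]

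Inserting the large-argument Bessel asymptotic $J_\nu(t)=\sqrt{2/(\pi t)}\cos(t-\nu\pi/2-\pi/4)+O(t^{-3/2})$ decomposes each kernel term into an oscillatory principal piece of amplitude $\sim|y|^{-2n}$ plus an $O(R^{-1}|y|^{-2n-1})$ remainder, and the remainder is absolutely summable uniformly in $R$; hence the divergence is governed by the principal lattice sum
\[
\sum_{m\in\Z^{2n}}\frac{c'\cos\!\bigl(2\pi R|(x,x)-m|-\phi_n\bigr)}{|(x,x)-m|^{2n}}.
\]
The absolute counterpart $\sum_{|m|\le T}|(x,x)-m|^{-2n}$ grows like $\log T$, since the dyadic annulus at radius $T$ in $\Z^{2n}$ contributes $T^{2n-1}\cdot T^{-2n}\sim T^{-1}$. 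Consequently, coherent alignment of sufficiently many phases must produce partial sums of size $\gtrsim \log T$.

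The crucial step, adapting the linear construction of \cite{Bochner} and \cite[p.~267]{SW}, is to exhibit, for a.e.\ $x\in Q_n$, sequences $R_j\to\infty$ and $T_j\to\infty$ along which $\cos\!\bigl(2\pi R_j|(x,x)-m|-\phi_n\bigr)$ stays bounded below by a fixed positive constant for every $m\in\Z^{2n}$ with $|(x,x)-m|\le T_j$. This is a simultaneous Diophantine-approximation statement whose failure set is shown to have Lebesgue measure zero by a Borel--Cantelli argument. Granted such $R_j$, the aligned principal contribution is $\gtrsim \log T_j\to\infty$ while the non-aligned tail is kept $O(1)$ via oscillation-based cancellation, forcing $\limsup_{R\to\infty}|\B^{n-\frac12}_R(\delta_0,\delta_0)(x)|=\infty$.

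The main obstacle is this phase-alignment step. Because the diagonal $\{(x,x):x\in Q_n\}$ has measure zero in $\T^{2n}$, the classical a.e.\ divergence of the critical Bochner--Riesz sum on $\T^{2n}$ cannot be invoked on the ambient torus and then restricted. The Diophantine argument has to be performed directly for the one-parameter family $x\mapsto(x,x)$, exploiting the explicit structure $|(x,x)-m|^2=|x-m'|^2+|x-m''|^2$ with $m=(m',m'')\in\Z^n\times\Z^n$. A secondary technical point is that $\sum_{|m|>T}|(x,x)-m|^{-2n}$ diverges absolutely, so controlling the non-aligned tail along $R_j$ requires an oscillation-based estimate ($\ell^2$ orthogonality or dyadic summation by parts) rather than the naive triangle inequality.
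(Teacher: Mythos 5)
Your opening reduction matches the paper exactly: you view $\B_R^{n-\frac12}(\delta_0,\delta_0)(x)$ as the critical-index linear Bochner--Riesz kernel on $\T^{2n}$ evaluated at the diagonal point $(x,x)$, apply Poisson summation, insert the Bessel asymptotic to extract the principal oscillatory lattice sum $\sum_m \cos(2\pi R|(x,x)-m|-\phi_n)\,|(x,x)-m|^{-2n}$, observe its absolute counterpart diverges logarithmically, and you isolate the right full-measure set of $x$ (those for which the radii $|(x,x)-m|$ together with $1$ are $\Q$-linearly independent). All of this is in the paper.

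The key step is where you diverge, and the route you sketch has a genuine gap. You propose to find, for each $x$, sequences $R_j,T_j\to\infty$ with the cosines simultaneously bounded below for every $m$ with $|(x,x)-m|\le T_j$, and then to argue the non-aligned tail is $O(1)$ ``via $\ell^2$ orthogonality or dyadic summation by parts.'' Two problems. First, the whole lattice sum is only conditionally convergent at $\alpha=n-\frac12$ (the paper explicitly notes absolute convergence holds only for $\alpha>n-\frac12$), so the ``tail'' is not even an absolutely convergent quantity whose size one could hope to bound pointwise; a uniform $O(1)$ bound along an adversarially chosen sequence $R_j$ (chosen precisely to maximize the head) would need a genuinely quantitative argument, and nothing of the sort is sketched. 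Second, the number of frequencies you must align is $\sim T_j^{2n}$, and although Kronecker's theorem (granted the $\Q$-independence) produces an $R_j$ aligning any finite collection, the resulting $R_j$ has no control compatible with a separate tail estimate. The paper sidesteps both difficulties: instead of aligning phases, it integrates the kernel $K_R^{n-\frac12}((x_0,x_0))$ against the nonnegative Riesz product $\prod_{j\le N}\bigl[1+\tfrac12(e^{-in\pi}e^{2\pi i\lambda_j R}+e^{in\pi}e^{-2\pi i\lambda_j R})\bigr]$, averages in $R$ over $[1,T]$, and lets $T\to\infty$. The extra oscillation $e^{\pm 2\pi i\lambda R}$ makes each term of the lattice sum converge under this average even at $\alpha=n-\frac12$, and $\Q$-independence of $\{1\}\cup\{\lambda_j\}$ kills every cross term, leaving exactly $C_n\sum_{j\le N}\lambda_j^{-2n}$. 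Boundedness of $K_R$ would force this to be $\le A_{x_0}$ for all $N$, contradicting $\sum_j\lambda_j^{-2n}=\infty$. This is a soft Bohr--Fourier/Bessel-inequality contradiction; no phase alignment and no tail estimate are needed. Your proposal would need those two missing estimates supplied before it could be considered a proof.
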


\begin{proof}
Consider the set 
$$S =\{x \in \R^{n}: \{1\}\cup\{|(x,x)-m|: m\in\Z^{2n}\}\text{ is linearly independent over  rationals}~ \Q\}.$$
Note that the complement of $S$ is a set of measure zero in $\R^n$. Let $x_0\in(Q_n\setminus\{0\})\cap S$. Let $K_R^{\alpha}(x,y)$ denote the kernel of the bilinear Bochner-Riesz operator $\B^{\alpha}$, i.e.,  $\hat{K}_R^{\alpha}(\xi,\eta)=(1-\frac{|m_1|^2+|m_2|^2}{R^2})^{\alpha}_{+}$, $(m_1,m_2)\in \Z^{2n}$. By the Poisson summation formula, we get 
\begin{equation}\label{kernel}
    K_R^\alpha((x_0,x_0))=C_\alpha R^{2n}\sum_{m\in\Z^{2n}}\frac{J_{n+\alpha}(2\pi R|(x_0,x_0)-m|)}{(R|(x_0,x_0)-m|)^{n+\alpha}}.
\end{equation}
Recall the asymptotics for the Bessel functions 
\begin{eqnarray*}
J_{n+\alpha}(2\pi R|(x_0,x_0)-m|)&=&\frac{e^{2\pi iR|(x_0,x_0)-m|}e^{-i(\frac{\pi}{2}(n+\alpha)+\frac{\pi}{4})}+e^{-2\pi iR|(x_0,x_0)-m|}e^{i(\frac{\pi}{2}(n+\alpha)+\frac{\pi}{4})}}{\pi\sqrt{R|(x_0,x_0)-m|}}\\
&&+~O((R|(x_0,x_0)-m|)^{-\frac{3}{2}}).
\end{eqnarray*}
Note that the infinite series in~\eqref{kernel} converges absolutely for $\alpha>n-\frac{1}{2}$. Recall that we are concerned with the estimates when $\alpha=n-\frac{1}{2}$, but we cannot take $\alpha\to n-\frac{1}{2}$ in the equation above. However, by taking an average over the parameter $R$, we can take $\alpha\to n-\frac{1}{2}$ and get the following estimate at $\alpha=n-\frac{1}{2}$. 
\begin{eqnarray*}
    \frac{1}{T}\int_1^T K_R^{n-\frac{1}{2}}((x_0,x_0))e^{2\pi i\lambda R}dR&=& C_n \sum_{m\in\Z^{2n}}\frac{e^{-i(\frac{\pi}{2}(2n-\frac{1}{2})+\frac{\pi}{4})}}{|(x_0,x_0)-m|^{2n}}\left(\frac{1}{T}\int_1^T e^{2\pi iR(\lambda+|(x_0,x_0)-m|)}dR\right) \\
    &&+~ C_n \sum_{m\in\Z^{2n}}\frac{e^{i(\frac{\pi}{2}(2n-\frac{1}{2})+\frac{\pi}{4})}}{|(x_0,x_0)-m|^{2n}}\left(\frac{1}{T}\int_1^T e^{-2\pi iR(\lambda-|(x_0,x_0)-m|)}dR\right)\\
    &&+~ C_n \sum_{m\in\Z^{2n}}O\left(\frac{1}{|(x_0,x_0)-m|^{2n+1}}\right)\frac{1}{T}\int_1^T \frac{dR}{R}.
\end{eqnarray*}
In the equation above note that if $\lambda\neq\pm|(x_0,x_0)-m|, m\in\Z^{2n}$, then all the terms on the right hand side vanish as $T\rightarrow \infty$. Moreover, if  $\lambda=\pm|(x_0,x_0)-m|$ for some $m\in\Z^{2n}$, the right hand side converges (as $T\rightarrow \infty$) to 
$$C_n \frac{e^{\pm in\pi}}{|(x_0,x_0)-m|^{2n}}.$$
Consider the set $\Lambda_{x_0}=\{|(x_0,x_0)-m|:m\in\Z^{2n}\}$ and enumerate it as $\Lambda_{x_0}=\{\lambda_1,\lambda_2,\lambda_3,...\},$ 
where $\lambda_1<\lambda_2<\lambda_3<...$ and $\sum\limits_{j=1}^\infty\frac{1}{\lambda_j^{2n}}=\infty$. With this choice and notation we have that 
\begin{align*}
    \lim_{T\to\infty}\frac{1}{T}\int_1^T K_R^{n-\frac{1}{2}}((x_0,x_0))e^{2\pi i\lambda R}dR=\left\{
	\begin{array}{ll}
		C_n \frac{e^{in\pi}}{\lambda_j^{2n}}  & \mbox{if } \lambda=\lambda_j \\
		0 & \mbox{if } \lambda\neq\pm\lambda_j \\
		C_n \frac{e^{-in\pi}}{\lambda_j^{2n}} & \mbox{if } \lambda=-\lambda_j
	\end{array}
\right.
\end{align*}
Since $(x_0,x_0)\in S$, the set $\{1\}\cup\{\lambda_1,\lambda_2,\lambda_3,...\}$ is linearly independent over the rationals $\mathbb Q$ and hence no expression of the form $\pm\lambda_{j_1},\pm\lambda_{j_2},...,\pm\lambda_{j_s}$ can be equal to an integer. This gives us that  
$$\lim_{T\to\infty}\frac{1}{T}\int_1^T K_R^{n-\frac{1}{2}}((x_0,x_0))\prod_{j=1}^N \left[1+\frac{e^{-in\pi}e^{2\pi i\lambda_j R}+e^{in\pi}e^{-2\pi i\lambda_j R}}{2}\right]dR=C_n\sum_{j=1}^N\frac{1}{\lambda_j^{2n}}$$
Note that the assumption 
$$\sup_{R\geq1}|K_R^{n-\frac{1}{2}}((x_0,x_0))|\leq A_{x_0}<\infty$$
will yield 
\begin{eqnarray*}
   C_n\sum_{j=1}^N\frac{1}{\lambda_j^{2n}}&=& \lim_{T\to\infty}\frac{1}{T}\int_1^T K_R^{n-\frac{1}{2}}((x_0,x_0))\prod_{j=1}^N\left[1+\frac{e^{-in\pi}e^{2\pi i\lambda_j R}+e^{in\pi}e^{-2\pi i\lambda_j R}}{2}\right]dR\\
   &\leq& A_{x_0} \frac{1}{T}\int_1^T\prod_{j=1}^N\left[1+\frac{e^{-in\pi}e^{2\pi i\lambda_j R}+e^{in\pi}e^{-2\pi i\lambda_j R}}{2}\right]dR=A_{x_0}
\end{eqnarray*}
This contradicts the choice that $\sum\limits_{j=1}^\infty\frac{1}{\lambda_j^{2n}}=\infty$. Therefore, for $x\in S\cap Q_n$ we get that 
$$\sup_{R\geq1}|K_R^{n-\frac{1}{2}}(x,x)|=\infty.$$
This completes the proof of Lemma~\ref{Lemma:Dirac}. 
\end{proof}
Next, we show that Dirac masses in Lemma~\ref{Lemma:Dirac} can be replaced with suitable $L^1$ functions. This part is done in two steps as follows. 
\subsection*{Step I:}
In this step we will show that in the estimate \eqref{Dirac} we can replace one of the Dirac masses with an $L^1$ function so that the estimate holds on a set of positive measure. 
 
Let $\Phi\in\mathcal{S}(\R^n)$ be a radial function such that $\hat{\Phi}$ is non-negative and supported in the unit ball of $\R^n$ with $\int_{\R^n}\hat{\Phi}(\xi)d\xi=1$. Given $\epsilon>0$ define  
$$\phi_\epsilon(x)=\frac{1}{\epsilon^n}\sum_{m\in\Z^n}\hat{\Phi}\left(\frac{x+m}{\epsilon}\right).$$

The Poisson summation formula yields 
$$\phi_\epsilon(x)=\frac{1}{\epsilon^n}\sum_{m\in\Z^n}\hat{\Phi}(\frac{x+m}{\epsilon})=\sum_{m\in\Z^n}{\Phi}(\epsilon m)e^{2\pi im\cdot x}.$$

Since $\Phi\in\mathcal{S}(\R^n)$ we get that  
$$\sum_{m\in\Z^n}|{\Phi}(\epsilon m)|\leq\sum_{m\in\Z^n}\frac{C'_n}{(1+\epsilon|m|)^{n+1}}\leq \frac{C_n}{\epsilon^{n}}.$$
Recall that the linear Bochner-Riesz means of order $n-\frac{1}{2}$ acting on Dirac mass  is given by 
\begin{eqnarray*}
B^{n-\frac{1}{2}}_R(\delta_0)(x)&=&\sum_{|m|\leq R}\left(1-\frac{|m|^2}{R^2}\right)^{n-\frac{1}{2}}e^{2\pi ix\cdot m}\\
&=&c_n \sum_{m\in\Z^n}R^n\frac{J_{\frac{3n-1}{2}}(2\pi R|x-m|)}{(R|x-m|)^{\frac{3n-1}{2}}}
\end{eqnarray*}
Observe that if $R\leq 10$ we have $|B^{n-\frac{1}{2}}_R(\delta_0)(x)|\leq C_1$ for all $x\in Q_n$. When $R>10$ and $|x|\geq\frac{1}{10}$, we have $R|x-m|\geq1$ for all $m\in\Z^n$. Therefore, using asymptotics of Bessel function for $x\in E=[\frac{1}{10},\frac{1}{2})\cup [-\frac{1}{2},\frac{1}{10}]$ we get that 
\begin{eqnarray*}
    \sup_{x\in E}\sup_{R>10}|B^{n-\frac{1}{2}}_R(\delta_0)(x)|&\leq&c_n\sup_{x\in E}\sup_{R>10}\sum_{m\in\Z^n}\frac{R^n\cos(2\pi R|x-m|-\frac{3n\pi}{2})}{(R|x-m|)^{\frac{3n}{2}}}
    \leq C_2
\end{eqnarray*}
Let $C=\max\{C_1,C_2\}$.

Next, we use an inductive argument to construct measurable subsets $E_j\subset E$ with $|E_j|\geq\frac{4}{5}-\frac{1}{j}$, an increasing sequence ${R_j}$ and two positive null sequences $\epsilon_j\leq\delta_j,~j\geq1$ such that
\begin{equation}\label{toprove}
    \sup_{R\leq R_j}|\B^{{n-\frac{1}{2}}}_R(f,\delta_0)(x)|\geq j \text{\hspace{10mm} for all $x\in E_j$},
\end{equation}
where $f=\sum\limits_{s=1}^\infty2^{-s}(\phi_{\epsilon_s}-\phi_{\delta_s})\in L^1(\R^n)$. 

Observe that the desired property  holds trivially with the initial choice of $E_1=\emptyset$, $R_1=1,$ and $\epsilon_1=\delta_1=1$.  Next, suppose that we have chosen $E_j,R_j,\epsilon_j,\delta_j$  satisfying \eqref{toprove} for all $1\leq j\leq k-1$. We need to construct $E_k,R_k,\epsilon_k$ and $\delta_k$ so that \eqref{toprove} holds for $j=k$. We will choose $\delta_k$ first. Let $B$ be a constant  such that
$$|\Phi(x)-\Phi(y)|\leq B|x-y|,~x,y\in\R^n.$$ 
Choose $\delta_k>0$ such that
\begin{equation*}\label{lipschitz}
    B\delta_k \sum_{|(m_1,m_2)|\leq R_{k-1}}|m_1|\leq 1.
\end{equation*}
Write  $A_k=CC_n\left(2^{-k}\delta_k^{-n}+\sum\limits_{s=1}^{k-1}2^{-s}(\epsilon_s^{-n}+\delta_s^{-n})\right)$. Consider  
\begin{eqnarray*}
    &&\B^{{n-\frac{1}{2}}}_R\left(-2^{-k}\phi_{\delta_k}+\sum_{j=1}^{k-1}2^{-s}(\phi_{\epsilon_s}-\phi_{\delta_s}),\delta_0\right)(x)\\
    &=&\nonumber\sum_{|(m_1,m_2)|\leq R}\left(1-\frac{|m_1|^2+|m_2|^2}{R^2}\right)^{{n-\frac{1}{2}}}\left(-2^{-k}\phi_{\delta_k}+\sum_{s=1}^{k-1}2^{-s}(\phi_{\epsilon_s}-\phi_{\delta_s})\right)^{\widehat{}}(m_1)e^{2\pi ix\cdot(m_1+m_2)}\\
    &=&\nonumber\sum_{|m_1|\leq R}\left(1-\frac{|m_1|^2}{R^2}\right)^{{n-\frac{1}{2}}}\left(-2^{-k}\Phi(\delta_k m_1)+\sum_{s=1}^{k-1}2^{-s}(\Phi(\epsilon_s m_1)-\Phi(\delta_s m_1))\right)e^{2\pi ix\cdot m_1}\\
    &&\nonumber\sum_{|m_2|\leq\sqrt{R^2-|m_1|^2}}\left(1-\frac{|m_2|^2}{R^2-|m_1|^2}\right)^{{n-\frac{1}{2}}}e^{2\pi ix\cdot m_2}.
\end{eqnarray*}
We make a crude estimate for the terms above in the following way. 
\begin{eqnarray}\label{smaller}
    &&\sup_{x\in E}\sup_{R>0}|\B^{{n-\frac{1}{2}}}_R\left(-2^{-k}\phi_{\delta_k}+\sum_{s=1}^{k-1}2^{-s}(\phi_{\epsilon_s}-\phi_{\delta_s}),\delta_0\right)(x)|\\
    &\leq& C \nonumber\sup_{R>0}\sum_{|m_1|\leq R}\left(2^{-k}|\Phi(\delta_k m_1)|+\sum_{s=1}^{k-1}2^{-s}(|\Phi(\epsilon_s m_1)|+|\Phi(\delta_s m_1)|)\right)\\
    &\leq&\nonumber C\sum_{m_1\in\Z^n}\left(2^{-k}|\Phi(\delta_k m_1)|+\sum_{s=1}^{k-1}2^{-s}(|\Phi(\epsilon_s m_1)|+|\Phi(\delta_s m_1)|)\right)\\
    &\leq&\nonumber CC_n\left(2^{-k}\delta_k^{-n}+\sum_{s=1}^{k-1}2^{-s}(\epsilon_s^{-n}+\delta_s^{-n})\right)= A_k
\end{eqnarray}
Using Fatou's lemma and the estimate proved in Lemma \ref{Lemma:Dirac}, we get that 
$$\liminf_{N\to\infty}\left|\left\{x\in E:\sup_{0<R\leq N} |\B^\alpha_R(\delta_0,\delta_0)(x)|>2^k(A_k+k+2)\right\}\right|=\frac{4}{5},$$
Thus, there exists an $R_k>R_{k-1}$ such that the set
$$E_k=\left\{x\in E:\sup_{0<R\leq R_k} |\B^\alpha_R(2^{-k}\delta_0,\delta_0)(x)|>A_k+k+2\right\}$$
has measure at least $\frac{4}{5}-\frac{1}{k}$.

Next, we choose $0<\epsilon_k\leq \delta_k$ so that
\begin{eqnarray*}
    &&\sup_{x\in Q_n}\sup_{R\leq R_k}|\B^{{n-\frac{1}{2}}}_R(2^{-k}\delta_0,\delta_0)(x)-\B^{{n-\frac{1}{2}}}_R(2^{-k}\phi_{\epsilon_k},\delta_0)(x)|\\
    &\leq& \sum_{|(m_1,m_2)|\leq R_k}2^{-k}\left(1-\frac{|m_1|^2+|m_2|^2}{R_k^2}\right)^{{n-\frac{1}{2}}}|1-\Phi(\epsilon_k m_1)|\leq 1
\end{eqnarray*}
Note that such a choice of $\epsilon_k$ is possible for a fixed $R_k$ because $|1-\Phi(\epsilon m_1)|\to 0$ as $\epsilon\to 0$. Therefore, we have 
\begin{equation}\label{equal}
    \inf_{x\in E_k}\sup_{R\leq R_k} |\B^{n-\frac{1}{2}}_R(2^{-k}\phi_{\epsilon_k},\delta_0)(x)|\geq A_k+k+1
\end{equation}
The choice of $\delta_k$ allows us to deduce the following estimate 
\begin{eqnarray}\label{greater}
   &&  \sup_{x\in Q_n}\sup_{R\leq R_k} \left|\B^{n-\frac{1}{2}}_R\left(\sum_{s=k+1}^\infty2^{-s}(\phi_{\epsilon_s}-\phi_{\delta_s}),\delta_0\right)(x)\right|\\
    &\leq&\nonumber \sum_{|(m_1,m_2)|\leq R_k}\left[\sum_{s=k+1}^\infty2^{-s}|\Phi(\epsilon_s m_1)-\Phi(\delta_s m_1)|\right]\\
    &\leq& \nonumber \sum_{|(m_1,m_2)|\leq R_k}\left[\sum_{s=k+1}^\infty2^{-s}B|\delta_s-\epsilon_s||m_1|\right]\\\nonumber
    &\leq& B\delta_{k+1} \sum_{|(m_1,m_2)|\leq R_k}|m_1|\leq 1.
\end{eqnarray}
Now for $j=k$ we have 
\begin{eqnarray*}
    \B^{{n-\frac{1}{2}}}_R\left(\sum_{s=1}^\infty2^{-s}(\phi_{\epsilon_s}-\phi_{\delta_s}),\delta_0\right)(x)&=&\B^{{n-\frac{1}{2}}}_R\left(-2^{-k}\phi_{\delta_k}+\sum_{s=1}^{k-1}2^{-s}(\phi_{\epsilon_s}-\phi_{\delta_s}),\delta_0\right)(x)\\
    &&+~ \B^{n-\frac{1}{2}}_R(2^{-k}\phi_{\epsilon_k},\delta_0)(x)+\B^{n-\frac{1}{2}}_R\left(\sum_{s=k+1}^\infty2^{-s}(\phi_{\epsilon_s}-\phi_{\delta_s}),\delta_0\right)(x)
\end{eqnarray*}
Using the estimates \eqref{smaller}, \eqref{equal} and \eqref{greater} for $x\in E_k$, we get that 
\begin{eqnarray*}
\sup_{R\leq R_k}|\B^{{n-\frac{1}{2}}}_R\left(\sum_{s=1}^\infty2^{-s}(\phi_{\epsilon_s}-\phi_{\delta_s}),\delta_0\right)(x)|&\geq&\sup_{R\leq R_k}|\B^{{n-\frac{1}{2}}}_R(2^{-k}\phi_{\epsilon_k},\delta_0)(x)|\\
&&-~\sup_{R\leq R_k}|\B^{{n-\frac{1}{2}}}_R\left(-2^{-k}\phi_{\delta_k}+\sum_{s=1}^{k-1}2^{-s}(\phi_{\epsilon_s}-\phi_{\delta_s}),\delta_0\right)(x)|\\
&&-~\sup_{R\leq R_k}|\B^{n-\frac{1}{2}}_R\left(\sum_{s=k+1}^\infty2^{-s}(\phi_{\epsilon_s}-\phi_{\delta_s}),\delta_0\right)(x)|\\
&\geq&k
\end{eqnarray*}

Denote $f=\sum_{s=1}^\infty2^{-s}(\phi_{\epsilon_s}-\phi_{\delta_s})$ and observe that we have  $\sup\limits_{R>0}|\B^{{n-\frac{1}{2}}}_R(f,\delta_0)(x)|\geq k$ for all $x\in\cup_{r\geq k}E_k$. Therefore, 
\begin{equation}\label{one1}
    \sup_{R>0}|\B^{n-\frac{1}{2}}_R(f,\delta_0)(x)|=\infty
\end{equation}
for all $x\in E=\cap_{r\geq1}\cup_{r\geq k}E_k$. Note that $|E|=\frac{4}{5}$.

\subsection*{Step II:} 
In this step we replace the Dirac mass in the second place by $f$ as constructed in the previous step. We need to make minor modifications to the arguments used in the previous step. We provide essential details here for a self contained proof. We will use the same notation as in the previous step. However, the parameters may differ from the previous step.

Let $M$ denote the classical Hardy-Littlewood maximal function defined by 
$$Mf(x):=\sup_{t>0}\frac{1}{|B(x,t)|}\int_{B(x,t)} |f(y)|dy,$$
where $B(x,t)$ is the euclidean ball of radius $t$ and center $x$. 
Since $f\in L^1(Q_n)$, we know that $M(f)(x)$ is finite a.e. $x\in Q_n$ and there holds  weak-type $(1,1)$ estimate  
$$|\{x\in E:|M(f)(x)|>N\}|\leq \frac{c_n}{N}\|f\|_1. $$
Choose $N$ large enough that $|\{x\in E:|M(f)(x)|>N\}|<\frac{1}{5}$. Let $F=\{x\in E:|M(f)(x)|\leq N\}$. Then $|F|\geq \frac{3}{5}$.

 For $j\geq1$, we will construct measurable subsets $F_j\subset F$ such that $|F_j|\geq\frac{3}{5}-\frac{1}{j}$, an increasing sequence ${R_j}$ and two positive null sequences $\epsilon_j\leq\delta_j$ such that
\begin{equation}\label{toprove1}
    \sup_{R\leq R_j}|\B^{{n-\frac{1}{2}}}_R\left(f,\sum_{s=1}^\infty2^{-s}(\phi_{\epsilon_s}-\phi_{\delta_s})\right)(x)|\geq j \text{\hspace{10mm} for all $x\in F_j$}.
\end{equation}

As previously, we begin with $F_1=\emptyset, R_1=1,$ and $\epsilon_1=\delta_1=1$. Suppose we have chosen $F_j,R_j,\epsilon_s,\delta_s$ for all $1\leq j\leq k-1$ satisfying \eqref{toprove1}. Choose $\delta_k>0$ small enough so that
\begin{equation*}\label{lipschitz1}
    B\|f\|_1\delta_k \sum_{|(m_1,m_2)|\leq R_{k-1}}|m_2|\leq 1.
\end{equation*}
Denote $B_k=NC_n\left(2^{-k}\delta_k^{-n}+\sum_{s=1}^{k-1}2^{-s}(\epsilon_s^{-n}+\delta_s^{-n})\right)$ and as in the previous step we get that
\begin{eqnarray}\label{smaller1}
    && \nonumber \sup_{x\in F}\sup_{R>0}|\B^{{n-\frac{1}{2}}}_R\left(f,-2^{-k}\phi_{\delta_k}+\sum_{s=1}^{k-1}2^{-s}(\phi_{\epsilon_s}-\phi_{\delta_s})\right)(x)|\\
    &\leq&\nonumber N~\sup_{R>0}\sum_{|m_2|\leq R}\left(2^{-k}|\Phi(\delta_k m_2)|+\sum_{s=1}^{k-1}2^{-s}(|\Phi(\epsilon_s m_2)|+|\Phi(\delta_s m_2)|)\right)\\
    &\leq&\nonumber N\sum_{m_2\in\Z^n}\left(2^{-k}|\Phi(\delta_k m_2)|+\sum_{s=1}^{k-1}2^{-s}(|\Phi(\epsilon_s m_2)|+|\Phi(\delta_s m_2)|)\right)\\
    &\leq&\nonumber NC_n\left(2^{-k}\delta_k^{-n}+\sum_{s=1}^{k-1}2^{-s}(\epsilon_s^{-n}+\delta_s^{-n})\right)\\
    &=& B_k.
\end{eqnarray}
Using Fatou's lemma and the estimate \eqref{one1}, we have
$$\liminf_{N\to\infty}\left|\left\{x\in F:\sup_{0<R\leq N} |\B^\alpha_R(f,\delta_0)(x)|>2^k(B_k+k+2)\right\}\right|=\frac{3}{5},$$
Choose $R_k>R_{k-1}$ such that the set 
$F_k=\left\{x\in F:\sup\limits_{0<R\leq R_k} |\B^\alpha_R(f,2^{-k}\delta_0)(x)|>B_k+k+2\right\}$
has measure at least $\frac{3}{5}-\frac{1}{k}$. Next, we choose $\epsilon_k\leq \delta_k$ so that
\begin{eqnarray*}
    &&\sup_{x\in F}\sup_{R\leq R_k}|\B^{{n-\frac{1}{2}}}_R(f,2^{-k}\delta_0)(x)-\B^{{n-\frac{1}{2}}}_R(f,2^{-k}\phi_{\epsilon_k})(x)|\\
    &\leq& \sum_{|(m_1,m_2)|\leq R_k}2^{-k}\left(1-\frac{|m_1|^2+|m_2|^2}{R_k^2}\right)^{{n-\frac{1}{2}}}|\hat{f}(m_1)||1-\Phi(\epsilon_k m_2)|\\
    &\leq& \sum_{|(m_1,m_2)|\leq R_k}2^{-k}\left(1-\frac{|m_1|^2+|m_2|^2}{R_k^2}\right)^{{n-\frac{1}{2}}}\|f\|_1|1-\Phi(\epsilon_k m_2)|\leq 1.
\end{eqnarray*}
Therefore, we get that 
\begin{equation}\label{equal1}
    \inf_{x\in F_k}\sup_{R\leq R_k} |\B^{n-\frac{1}{2}}_R(2^{-k}\phi_{\epsilon_k},\delta_0)(x)|\geq B_k+k+1.
\end{equation}
Also, we have 
\begin{eqnarray}\label{greater1}
    && \sup_{0<R\leq R_k} \left|\B^{n-\frac{1}{2}}_R\left(f,\sum_{s=k+1}^\infty2^{-s}(\phi_{\epsilon_s}-\phi_{\delta_s})\right)(x)\right|\\
    &\leq&\nonumber \sum_{|(m_1,m_2)|\leq R_k}|\hat{f}(m_1)|\left[\sum_{s=k+1}^\infty2^{-s}|\Phi(\epsilon_s m_2)-\Phi(\delta_s m_2)|\right]\\\nonumber
    &\leq& \sum_{|(m_1,m_2)|\leq R_k}\|f\|_1\left[\sum_{s=k+1}^\infty2^{-s}B|\delta_s-\epsilon_s||m_2|\right]\\\nonumber
    &\leq& B\|f\|_1\delta_{k+1} \sum_{|(m_1,m_2)|\leq R_k}|m_2| \leq 1.
\end{eqnarray}
When $j=k$, using the estimates \eqref{smaller1}, \eqref{equal1} and \eqref{greater1} for $x\in F$ we can get the following estimate (as in the previous step) 
\begin{eqnarray*}
\sup_{R\leq R_k}|\B^{{n-\frac{1}{2}}}_R\left(f,\sum_{s=1}^\infty2^{-s}(\phi_{\epsilon_s}-\phi_{\delta_s})\right)(x)|
&\geq&k
\end{eqnarray*}
This implies that $\sup\limits_{R>0}|\B^{{n-\frac{1}{2}}}_R(f,f)(x)|\geq k$ for all $x\in\cup_{r\geq k}F_k$. Therefore, 
\begin{equation*}\label{one}
    \sup_{R>0}|\B^{n-\frac{1}{2}}_R(f,f)(x)|=\infty
\end{equation*}
for all $x\in A=\cap_{r\geq1}\cup_{r\geq k}F_k$. Clearly, the set $A$ has positive measure. This completes the proof of Theorem~\ref{div}. \qed
%%%%%%%%%%%%%%%%%
\section{Weighted estimates for the maximal function \texorpdfstring{$\B^{n-\frac{1}{2}}_*$}{B1}}\label{max:sec}
The $L^p$ estimates for the maximal function $\B^{\alpha}_*$ were studied by~Grafakos, He and Honzik~\cite{GHH} and Jeong and Lee~\cite{JL}, which were later improved by Jotsaroop and Shrivastava~\cite{JS}. The problem of weighted boundedness of the bilinear Bochner-Riesz means $\B_R^{n-\frac{1}{2}}$ and the maximal function $\B^{n-\frac{1}{2}}_*$ was addressed in~\cite{JSK} for $n\geq 2$. The case of $n=1$ does not follow from their method. We complete the picture by giving a different proof of the weighted $L^p$ boundedness of $\B^{n-\frac{1}{2}}_*$. This proof works uniformly in all dimensions. We make use of the idea developed in~\cite{JS} to decompose the  bilinear Bochner-Riesz multiplier $(1-\frac{|\xi|^2+|\eta|^2}{R^2})^{\alpha}_{+}$ in a specific manner. This idea along with the Stein's complex interpolation for analytic family of bilinear operators is used to deduce the desired weighted estimates. This approach naturally requires us to consider the operator $\B^{\alpha}_*$ for complex parameter $\alpha$ which can be defined in a similar fashion by simply taking the multiplier $(1-\frac{|\xi|^2+|\eta|^2}{R^2})^{\alpha}_{+}$ for  $\alpha\in \C$ with $\text{Re}(\alpha)>0$.

The following lemma play a key role in proving Theorem~\ref{mainthm} for  $\B^{n-\frac{1}{2}}_*$. 
\begin{lemma}\label{keylem1}
Let $n\geq 1$ and $z$ be a complex number such that $0< Re(z)< n-\frac{1}{2}$. Then we have the following estimate 
\begin{eqnarray*} \int_{\mathbb{R}^n} |\partial_z \mathcal {B}^{z}_*(f,g)(x)| dx &\leq & C_{n+Re(z)} e^{\mathfrak{C} | Im(z)|^{2}}\| f\|_{L^{2}}\| g\|_{L^{2}},
\end{eqnarray*}
where $\mathfrak{C}>0$ is a constant. 
\end{lemma}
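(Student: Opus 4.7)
The strategy is to reduce the $L^1$-estimate for $\partial_z \B^z_*$ to $L^1$-bounds on two bilinear square functions via a Sobolev-type inequality in the scale parameter $R$, then handle each square function through a dyadic decomposition of the Bochner-Riesz multiplier near its boundary as developed in \cite{JS}. The factor $e^{\mathfrak{C}|Im(z)|^2}$ should emerge naturally from the analytic $z$-dependence of the Bessel-function kernel and its Gamma-function normalization.

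\textbf{Carrying out the key steps.} Set $H(R, x) = \partial_z \B^z_R(f, g)(x)$ and note that $H(R, x) \to 0$ as $R \to \infty$ for $f, g \in \mathcal{S}(\R^n)$ since $Re(z) > 0$. The fundamental theorem of calculus combined with Cauchy-Schwarz in $dR/R$ gives the pointwise bound
\[
\sup_R |H(R, x)|^2 \leq 2\, \mathcal{S}_0(x)\, \mathcal{S}_1(x), \qquad \mathcal{S}_0(x) := \Big(\int_0^\infty |H(R,x)|^2\, \tfrac{dR}{R}\Big)^{1/2}, \quad \mathcal{S}_1(x) := \Big(\int_0^\infty R |\partial_R H(R,x)|^2\, dR\Big)^{1/2}.
\]
Taking square roots, integrating in $x$, and applying Cauchy-Schwarz in $x$ reduce the lemma to the twin estimates $\|\mathcal{S}_i(f, g)\|_{L^1(\R^n)} \leq C_{n + Re(z)}\, e^{\mathfrak{C}|Im(z)|^2}\, \|f\|_2 \|g\|_2$ for $i = 0, 1$. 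To handle each $\mathcal{S}_i$, following \cite{JS} I decompose $(1-s)^z_+ = \sum_{j \geq 0} \psi_j(1-s)(1-s)^z_+$ with $\psi_j$ a smooth bump localized at $1 - s \sim 2^{-j}$, splitting $\mathcal{S}_i = \sum_j \mathcal{S}_i^{(j)}$ so that each piece has symbol of size $2^{-jz}$ supported in the thin frequency shell of thickness $R^2 \cdot 2^{-j}$ around $|\xi|^2 + |\eta|^2 = R^2$. A shell-adapted bilinear Plancherel argument combined with the explicit Bessel representation of the shell kernel should give
\[
\|\mathcal{S}_i^{(j)}(f, g)\|_{L^1(\R^n)} \lesssim (1 + |Im(z)|)^{c}\, 2^{-j\, Re(z)}\, \|f\|_2\, \|g\|_2,
\]
so that the geometric series in $j$ converges with sum bounded by $C/Re(z)$ for $Re(z) > 0$. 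The $Im(z)$-dependence entering through the phase $(1-s)^{i\, Im(z)} = e^{i\, Im(z)\log(1-s)}$ and through the Bessel-kernel normalization $1/\Gamma(n + z + 1)$ (whose modulus grows like $e^{c|Im(z)|}$ by Stirling) combines to give the $e^{\mathfrak{C}|Im(z)|^2}$ factor as a convenient slack over-estimate.

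\textbf{Main obstacle.} The crux is the per-scale bound $\|\mathcal{S}_i^{(j)}(f, g)\|_{L^1} \lesssim 2^{-j\, Re(z)}\, \|f\|_2\, \|g\|_2$. For $0 < Re(z) < n - \tfrac12$ the shell-localized kernel of $\B^{z, j}_R$ is not in $L^1(\R^{2n})$, so the naive Young-type bound $\|K^{z, j}_R\|_{L^1} \|f\|_2 \|g\|_2$ diverges. One must instead exploit either the oscillatory cancellation in the Bessel kernel via integration by parts in the spatial variables, or a bilinear restriction-type estimate near the sphere, combined with the $L^2(dR/R)$-averaging in the definition of $\mathcal{S}_i$, which provides further cancellation in $R$. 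Tracking the $Im(z)$-dependence of the resulting constants uniformly through this analysis, so as to extract the sought exponential factor without losing powers of $Re(z)$, is the delicate bookkeeping aspect of the argument.
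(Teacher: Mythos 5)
Your top-level plan---dyadic decomposition in the distance $1-\frac{|\xi|^2+|\eta|^2}{R^2}$ to the boundary, geometric decay $2^{-j\,\mathrm{Re}(z)}$ per scale, Gamma-function asymptotics to control the $\mathrm{Im}(z)$-growth---correctly identifies several of the ingredients in the paper. However, there is a genuine gap at exactly the place you flag as the ``main obstacle'': the per-scale bound $\|\mathcal{S}_i^{(j)}(f,g)\|_{L^1} \lesssim 2^{-j\,\mathrm{Re}(z)}\|f\|_2\|g\|_2$ is asserted, not established, and it is not obvious how to close it along the route you sketch. The paper does not attack the bilinear shell kernel directly via oscillatory or restriction estimates. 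Instead it uses the Kaur--Shrivastava representation (a bilinear version of Stein's identity): each piece $\B^z_{j,R}$ is written as $c_z\int_0^{\sqrt{2^{-j+1}}} S^{R,t}_{j,\beta}f(x)\, B^{z-\beta}_{Rt}g(x)\, t^{2(z-\beta)+1}\,dt$, which turns the bilinear multiplier into a $t$-integral of a product of \emph{linear} frequency-localized operators. Cauchy--Schwarz in $t$, then in $x$, reduces matters to known $L^2$ maximal/square-function estimates for these linear pieces from~\cite{JS}, and the $2^{-j\,\mathrm{Re}(z)}$ decay drops out of those known bounds rather than having to be extracted from scratch. Without this representation, your approach would have to reinvent a comparably delicate argument.

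Two further issues. First, your Sobolev reduction $\sup_R|H|^2 \le 2\,\mathcal{S}_0\,\mathcal{S}_1$ introduces $\mathcal{S}_1$, which involves $\partial_R H = \partial_R\partial_z\B^z_R(f,g)$; this is an extra derivative in $R$ that the paper never needs, because after the Kaur--Shrivastava representation one can simply put $\sup_R$ on the inside and invoke maximal-operator $L^2$ estimates. This makes your route genuinely harder, not just different bookkeeping. Second, the derivative $\partial_z$ produces a singular factor $\log\bigl(1-\frac{|\xi|^2+|\eta|^2}{R^2}\bigr)_+$ multiplying the multiplier. Your proposal discusses the phase $e^{i\,\mathrm{Im}(z)\log(1-s)}$ as the source of $\mathrm{Im}(z)$-dependence, but does not address how the logarithmic singularity from $\partial_z$ is absorbed. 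In the paper this is done by losing an arbitrarily small $\epsilon$ in the exponent (via $|(1-s)^{\delta}\log(1-s)_+| \lesssim (1-s)^{\delta-\epsilon}$ and $|t^\epsilon\log t|\lesssim_\epsilon 1$ on $[0,\sqrt{2^{-j+1}}]$), after which the summation in $j$ still converges since $\mathrm{Re}(z)>0$. This step is needed in any version of the argument and is missing from your outline.
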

\begin{remark}
	Along with Lemma~\ref{keylem1} we will also require $L^2\times L^2\rightarrow L^1$ boundedness of the maximal bilinear Bochner-Riesz function $\mathcal {B}^{z}_*(f,g)$ from ~\cite{JS}. We will make use of the ideas developed in~\cite{JS} to prove Lemma~\ref{keylem1}. 
\end{remark}
We postpone the proof of Lemma~\ref{keylem1} to the next section and complete the proof of Theorem~\ref{mainthm} first. The following auxiliary results will be used in the proof of Theorem~\ref{mainthm}. 
\begin{lemma}\cite{JSK} \label{weighted} Let $n\geq 1$ and $z\in \mathbb C$ be such that $Re(z)>n-\frac{1}{2}.$ Then the operator $\mathcal {B}^{z}_*$ is bounded from $L^{p_{1}}(\omega_{1})\times L^{p_{2}}(\omega_{2})\rightarrow L^{p}(v_{\omega})$ for all $\vec{\omega}\in A_{\vec{P}}$ with $1<p_{1}, p_{2}<\infty$ and $\frac{1}{p_{1}}+\frac{1}{p_{2}}=\frac{1}{p}$.
\end{lemma}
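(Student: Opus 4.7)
The plan is to reduce the weighted boundedness of $\mathcal{B}^z_*$ to a known weighted estimate for the bilinear Hardy--Littlewood maximal operator
$$
\mathcal{M}(f,g)(x)=\sup_{Q\ni x}\frac{1}{|Q|}\int_Q|f|\,\frac{1}{|Q|}\int_Q|g|,
$$
by proving a pointwise domination $\mathcal{B}^z_*(f,g)(x)\lesssim \mathcal{M}(|f|,|g|)(x)$. The key point is that the assumption $\mathrm{Re}(z)>n-\tfrac12$ places us strictly above the critical index, which should give enough kernel decay to make this reduction go through. Once such a domination is established, the weighted $A_{\vec P}$ bound of Lerner--Ombrosi--P\'erez--Torres--Trujillo-Gonz\'alez for $\mathcal{M}$ immediately yields the conclusion.

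First I would write the kernel of $\mathcal{B}^z_R$ explicitly. Since the multiplier $(1-(|\xi|^2+|\eta|^2)/R^2)^z_+$ is a radial function on $\mathbb{R}^{2n}$, its inverse Fourier transform is, up to a dimensional constant,
$$
K^z_R(y_1,y_2)=c_{n,z}\,R^{2n}\,\frac{J_{n+z}(2\pi R|(y_1,y_2)|)}{(R|(y_1,y_2)|)^{n+z}}.
$$
Using the standard uniform asymptotic $|J_\nu(r)|\lesssim (1+r)^{-1/2}$ for $\mathrm{Re}(\nu)$ in a bounded set (with constant growing at most polynomially in $|\mathrm{Im}(z)|$, or exponentially if one is careless), one gets the pointwise estimate
$$
|K^z_R(y_1,y_2)|\lesssim \frac{R^{2n}}{(1+R|(y_1,y_2)|)^{n+\mathrm{Re}(z)+\frac12}}.
$$
Because $\mathrm{Re}(z)>n-\tfrac12$, the exponent $n+\mathrm{Re}(z)+\tfrac12$ exceeds $2n$, so this bound is integrable on $\mathbb{R}^{2n}$ with a radially decreasing majorant; the total $L^1$ mass is bounded by a constant independent of $R$.

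Next, for the maximal function, I would use a standard dyadic decomposition of the above majorant: write $K^z_R$ as a sum over annuli $\{(y_1,y_2):2^{k-1}\le R|(y_1,y_2)|<2^k\}$, bounding the kernel on each annulus by $C\,2^{-k(n+\mathrm{Re}(z)+1/2-2n)}$ times the normalized indicator of a ball of radius $2^k/R$ in $\mathbb{R}^{2n}$. Convolving with $f\otimes g$ and restricting to the diagonal $(x,x)$, each term is controlled by the product of averages of $|f|$ and $|g|$ over a cube of sidelength $\sim 2^k/R$ centered at $x$; summing in $k$ using the geometric decay produced by $\mathrm{Re}(z)>n-\tfrac12$, and then taking the supremum over $R$, yields
$$
\mathcal{B}^z_*(f,g)(x)\le C(n,z)\,\mathcal{M}(|f|,|g|)(x),
$$
with $C(n,z)$ depending only on $\mathrm{Re}(z)$ and on $|\mathrm{Im}(z)|$ in a controlled way.

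Finally, the weighted $L^{p_1}(w_1)\times L^{p_2}(w_2)\to L^p(v_w)$ boundedness of $\mathcal{M}$ for all $\vec w\in A_{\vec P}$ with $1<p_1,p_2<\infty$ is the Lerner--Ombrosi--P\'erez--Torres--Trujillo-Gonz\'alez theorem, which combined with the pointwise estimate above closes the proof. The main obstacle I anticipate is the dyadic summation step: one must track the dependence of constants on $z$ carefully (both the growth in $|\mathrm{Im}(z)|$ coming from Bessel asymptotics and the blow-up as $\mathrm{Re}(z)\downarrow n-\tfrac12$), since Lemma \ref{weighted} is used as the right endpoint in Stein's complex interpolation together with Lemma \ref{keylem1}, and an uncontrolled constant would ruin the interpolation argument.
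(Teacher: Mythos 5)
Your approach is correct and is the standard one for weighted estimates above the critical index: for $\mathrm{Re}(z)>n-\tfrac12$ the kernel $K^z_R$ on $\mathbb{R}^{2n}$ has the integrable, radially decreasing majorant $R^{2n}(1+R|(y_1,y_2)|)^{-(n+\mathrm{Re}(z)+1/2)}$, and the dyadic-annulus argument then gives $\mathcal{B}^z_*(f,g)\lesssim\mathcal{M}(|f|,|g|)$, after which the $A_{\vec P}$ bound for $\mathcal{M}$ from Lerner et al.\ finishes the proof. The paper states this lemma as a citation to the reference labelled [JSK] and gives no proof of its own, so there is nothing internal to compare against, but your reduction to the bilinear Hardy--Littlewood maximal function (together with your remark that the constants grow at worst like $e^{\mathfrak{C}|\mathrm{Im}(z)|^2}$, which is what the complex interpolation in Section 3 requires) is exactly what is needed.
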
 
\begin{lemma}\label{wei1}\cite{Ler1}
	Let $\vec{\omega}=(\omega_{1},\omega_{2})\in A_{\vec{P}}$, where $\frac{1}{p}=\frac{1}{p_{1}}+\frac{1}{p_{2}}$ with $1<p_1,p_2 <\infty$, then there exists a $\delta>0$, such that $\vec{\omega}_{\delta}=(\omega^{1+\delta}_{1},\omega^{1+\delta}_{2})\in A_{\vec{P}}$. 
\end{lemma}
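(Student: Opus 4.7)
The plan is to mirror the classical $A_p$ self-improvement argument, now in the bilinear framework. The main tool is the reverse Hölder inequality for $A_\infty$ weights, combined with the algebraic structure of the bilinear weight class.

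First, I would observe that the hypothesis $\vec{\omega}\in A_{\vec{P}}$ forces the three auxiliary weights $v_\omega$ and $\sigma_j:=\omega_j^{1-p_j'}$, $j=1,2$, to lie in $A_\infty$. This is a standard consequence of the Lerner--Ombrosi--Pérez--Torres--Trujillo-González characterization of $A_{\vec{P}}$, which identifies $v_\omega\in A_{2p}$ and each $\sigma_j\in A_{2p_j'}$; both of these classes sit inside $A_\infty$. One can also extract this directly from the $A_{\vec{P}}$ characteristic via Hölder's inequality.

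Second, the reverse Hölder inequality supplies, for every $A_\infty$ weight $u$, an exponent $\eta(u)>0$ and a constant $C_u$ with
$$\frac{1}{|Q|}\int_Q u^{1+\eta}\,dx \leq C_u\left(\frac{1}{|Q|}\int_Q u\,dx\right)^{1+\eta}$$
uniformly in cubes $Q$. Setting $\delta=\min\{\eta(v_\omega),\eta(\sigma_1),\eta(\sigma_2)\}$, the inequality holds simultaneously for all three weights at the common exponent $1+\delta$.

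Third, I would invoke the algebraic identities $v_{\vec{\omega}_\delta}=\prod_{j}\omega_j^{(1+\delta)p/p_j}=v_\omega^{1+\delta}$ and $(\omega_j^{1+\delta})^{1-p_j'}=\sigma_j^{1+\delta}$, so that the $A_{\vec{P}}$ characteristic of $\vec{\omega}_\delta$ on a cube $Q$ reads
$$\left(\frac{1}{|Q|}\int_Q v_\omega^{1+\delta}\,dx\right)\prod_{j=1}^{2}\left(\frac{1}{|Q|}\int_Q\sigma_j^{1+\delta}\,dx\right)^{p/p_j'} \leq C\left[\left(\frac{1}{|Q|}\int_Q v_\omega\,dx\right)\prod_{j=1}^{2}\left(\frac{1}{|Q|}\int_Q\sigma_j\,dx\right)^{p/p_j'}\right]^{1+\delta} \leq C[\vec{\omega}]_{A_{\vec{P}}}^{1+\delta},$$
where $C$ depends only on the reverse Hölder constants $C_{v_\omega},C_{\sigma_1},C_{\sigma_2}$. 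Taking the supremum in $Q$ yields $\vec{\omega}_\delta\in A_{\vec{P}}$. The main obstacle is the characterization step that places $v_\omega$ and each $\sigma_j$ in $A_\infty$; once that is in hand, the reverse Hölder inequality and the elementary bookkeeping on exponents close the argument.
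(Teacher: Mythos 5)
Your proposal is correct. The paper does not supply a proof of this lemma but simply cites \cite{Ler1}; your argument reconstructs exactly the standard derivation one would extract from that reference, namely invoking the Lerner--Ombrosi--P\'erez--Torres--Trujillo-Gonz\'alez characterization (Theorem~3.6 of \cite{Ler1}, which places $v_\omega\in A_{2p}$ and each $\sigma_j=\omega_j^{1-p_j'}\in A_{2p_j'}$, hence in $A_\infty$), the reverse H\"older inequality for $A_\infty$ weights, and the algebraic identities $v_{\vec{\omega}_\delta}=v_\omega^{1+\delta}$ and $(\omega_j^{1+\delta})^{1-p_j'}=\sigma_j^{1+\delta}$. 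The only point worth making explicit is that after setting $\delta=\min\{\eta(v_\omega),\eta(\sigma_1),\eta(\sigma_2)\}$ one must note that a weight satisfying the reverse H\"older inequality at exponent $1+\eta$ automatically satisfies it at any smaller exponent $1+\delta$ by H\"older's inequality, so that the three inequalities hold simultaneously at the common exponent; this is standard and closes the argument.
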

\subsection{Proof of Theorem~\ref{mainthm}}
First, note that in view of the multilinear extrapolation theorem, see~\cite{KJS} for details, it is enough to prove Theorem~\ref{mainthm} for $\vec{P}=(2,2).$ More precisely, we need to prove 
\begin{eqnarray}\label{mainthm1} 
	\|\mathcal {B}_*^{n-\frac{1}{2}}(f,g)\|_{L^1(v_{\omega})}\lesssim \|f\|_{L^{2}(\omega_{1})} \|g\|_{L^{2}(\omega_{2})} ~\text{for~all~}\vec{\omega}\in A_{\vec{P}}, ~\vec{P}=(2,2).
	\end{eqnarray} 
We linearize the maximal function using a standard trick. Let $R(x)$ be an arbitrary positive measurable function on $\R^n$ such that both $R(x)^{-1}$ and $R(x)$ are bounded. It is enough to prove the estimate ~\eqref{mainthm1} for $\mathcal {B}_{R(x)}^{n-\frac{1}{2}}(f,g)$ with bounds independent of the function $R(x)$. 

Fix such a function $R(x)$. Let $\epsilon_1,\epsilon_2>0$ (to be chosen later), $N\in \mathbb N$ and $A>\mathfrak{C}$, where $\mathfrak{C}$ is the constant appearing in Lemma~\ref{keylem1}. Consider the operator 
$$\tilde{\mathcal {B}}^{z,\epsilon_{1},\epsilon_{2},N}_{R(x)}(f,g)(x)=\mathcal {B}^{(1-z)\epsilon_{1}+z(n-\frac{1}{2}+\epsilon_{2})}_{R(x)}(f,g)(x)(v_{N}(x))^{z}e^{A z^{2}},$$ 
where 
$$ v_{N}(x)= \begin{cases} v_{\omega}(x), & \mbox{if }  v_{\omega}(x)\leq N \\ N, & \mbox{if }v_{\omega}(x)>N. \end{cases}$$
Note that $v_{N}(x)\leq v_{\omega}(x)$ a.e. $x$. 

Let $f$ and $g$ be compactly supported positive smooth functions. Given $\delta_{0}>0$ define $$f^{z}_{\delta_{0}}(x)=f(x)(\omega_{1}(x)+\delta_{0})^{-\frac{z}{2}}~~~\text{and}~~~g^{z}_{\delta_{0}}(x)=g(x)(\omega_{2}(x)+\delta_{0})^{-\frac{z}{2}}.$$
For $h\in L^\infty(\R^n)$ consider 
\begin{eqnarray*}
	\psi(z)&=&\int_{\mathbb{R}^{n}}\tilde{\mathcal {B}}_{R(x)}^{z,\epsilon_{1},\epsilon_{2},N}(f^{z}_{\delta_{0}},g^{z}_{\delta_{0}})(x)h(x)dx \\ &=&\int_{\mathbb{R}^{n}}\mathcal {B}^{(1-z)\epsilon_{1}+z(n-\frac{1}{2}+\epsilon_{2})}_{R(x)}(f^{z}_{\delta_{0}},g^{z}_{\delta_{0}})(x)(v_{N}(x))^{z}h(x)e^{A z^{2}}dx,
\end{eqnarray*}
where  $0\leq \text{Re}(z)\leq 1$. 

Use Lemma \ref{keylem1} to conclude that $\psi$ is analytic in the strip $S=\lbrace z\in \mathbb{C} : 0<Re(z)<1\rbrace$,  bounded and continuous on the closure $\bar{S}=\lbrace z\in \mathbb{C} : 0\leq Re(z)\leq 1\rbrace$. Moreover, we have the following estimates at the boundary. 
\begin{eqnarray*}
	\sup_{t\in\mathbb{R}}| \psi(i t)| 
	&\leq &  \| h\|_{\infty}\sup_{t\in\mathbb{R}}e^{-At^{2}}\int_{\R^n} | \mathcal {B}_{R(x)}^{(1-i t)\epsilon_{1}+i t(n-\frac{1}{2}+\epsilon_{2})}\left(f(\omega_{1}+\delta_{0})^{\frac{-i t}{2}}, g(\omega_{2}+\delta_{0})^{\frac{-i t}{2}} \right)(x)| dx\\
&& (\text{Since}~~ \text{Re}[(1-i t)\epsilon_{1}+i t(n-\frac{1}{2}+\epsilon_{2})]=\epsilon_{1}>0~\text{apply~Lemma}~\ref{keylem1}) \\
	&\leq &
	C_{\epsilon_{1},\epsilon_{2}}\| h\|_{\infty}\sup_{t\in\mathbb{R}}e^{-(A-\mathfrak{C})t^{2}}\left(\int_{\R^n} | f(\omega_{1}+\delta_{0})^{-\frac{i t}{2}}|^{2} dx \right)^{\frac{1}{2}}\left(\int_{\R^n} | g(\omega_{2}+\delta_{0})^{-\frac{i t}{2}}|^{2} dx \right)^{\frac{1}{2}}\\
	&\leq & C_{\epsilon_{1},\epsilon_{2}}\| h\|_{\infty}\| f\|_{2}\| g\|_{{2}}.
\end{eqnarray*}
Similarly, 
\begin{eqnarray*}
	& & \sup_{t\in\mathbb{R}} | \psi(1+i t)|\\
	& \leq & \| h\|_{\infty}\sup_{t\in \mathbb{R}}e^{A(1-t^{2})}\int_{\R^n} | \mathcal {B}_{R(x)}^{(-i t)\epsilon_{1}+(1+it)(n-\frac{1}{2}+\epsilon_{2})}\left(f(\omega_{1}+\delta_{0})^{-\frac{1+i t}{2}}, g(\omega_{2}+\delta_{0})^{-\frac{1+i t}{2}} \right)(x)| v_{N}(x)dx\\
	& \leq & \| h\|_{\infty}\sup_{t\in \mathbb{R}}e^{A(1-t^{2})}\int_{\R^n} | \mathcal {B}_{R(x)}^{(-i t)\epsilon_{1}+(1+i t)(n-\frac{1}{2}+\epsilon_{2})}\left(f(\omega_{1}+\delta_{0})^{-\frac{1+i t}{2}}, g(\omega_{2}+\delta_{0})^{-\frac{1+i t}{2}} \right)(x)| v_{\omega}(x)dx.
\end{eqnarray*} 
Note that 
$\text{Re}[(-it)\epsilon_{1}+(1+it)(n-\frac{1}{2}+\epsilon_{2})]=n-\frac{1}{2}+\epsilon_{2}>n-\frac{1}{2}$ and 
 $(\omega_{j}+\delta_{0})^{-1}\leq \omega^{-1}_{j}, ~j=0,1$. Therefore, applying  Lemma~\ref{weighted}, we get that 
\begin{eqnarray*}
	&& \sup_{t\in\mathbb{R}}| \psi(1+i t)|  \\
	&\leq & C_{\epsilon_{1},\epsilon_{2}}\| h\|_{\infty}\sup_{t\in\mathbb{R}}e^{-(A-\mathfrak C)t^{2}}\left(\int_{\R^n} | f(\omega_{1}+\delta_{0})^{-\frac{1+i t}{2}}|^{2}\omega_{1}(x) dx \right)^{\frac{1}{2}}\left(\int_{\R^n} | g(\omega_{2}+\delta_{0})^{-\frac{1+i t}{2}}|^{2}\omega_{2}(x) dx \right)^{\frac{1}{2}}\\
	& \leq & C_{\epsilon_{1},\epsilon_{2}}\| h\|_{\infty}\| f\|_{2}\| g\|_{2}.
\end{eqnarray*}
With these estimates on the boundary of the strip $S$ apply `Three lines lemma' from complex analysis to get that 
\begin{eqnarray*}
	| \psi(\theta)|
	&\leq & \nonumber C\left(\sup_{t\in \mathbb{R}}| \psi(\iota t)|\right)^{1-\theta}\left(\sup_{t\in \mathbb{R}}| \psi(1+\iota t)|\right)^{\theta}\\
	&\leq & \label{three} C_{\epsilon_{1},\epsilon_{2}}\| h\|_{\infty}\| f\|_{2}\| g\|_{2},~~0<\theta<1.
\end{eqnarray*} 
Note that the constant in the estimate above does not depend on $R(x)$. This gives us that 
\begin{eqnarray*}
	|\psi(\theta)|
	&=&\left|\int_{\mathbb{R}^{n}}\mathcal {B}_{R(x)}^{(1-\theta)\epsilon_{1}+\theta(n-\frac{1}{2}+\epsilon_{2})}(f^{\theta}_{\delta_{0}},g^{\theta}_{\delta_{0}})(x)(v_{N}(x))^{\theta}h(x)dx \right| \\
	&=&\left|\int_{\R^n} \mathcal {B}_{R(x)}^{(1-\theta)\epsilon_{1}+\theta(n-\frac{1}{2}+\epsilon_{2})}\left(f(\omega_{1}+\delta_{0})^{\frac{-\theta}{2}}, g(\omega_{2}+\delta_{0})^{\frac{-\theta}{2}} \right)(x)(v_{N}(x))^{\theta}h(x)dx \right|\\
	&\leq & C_{\epsilon_{1},\epsilon_{2}}\| f\|_{2}\| g\|_{2}.
\end{eqnarray*}
Since the constant $C$ in the inequality above is independent of $N$ and $\delta_{0}$, let $N\rightarrow \infty$ and $\delta_{0}\rightarrow0$ and then replace $f$ and $g$ by  $f\omega_{1}^{\frac{\theta}{2}}$ and $g\omega_{2}^{\frac{\theta}{2}}$ respectively to get that 
\begin{align} \label{estimate}
	\int_{\R^n} |\mathcal {B}_{R(x)}^{(1-\theta)\epsilon_{1} +\theta(n-\frac{1}{2}+\epsilon_{2})}\left(f, g \right)(x)|(v_{\omega}(x))^{\theta}dx
	&\leq & C \left(\int_{\R^n} | f(x)|^{2}\omega_{1}^{\theta} dx\right)^{\frac{1}{2}}\left(\int_{\R^n} | g(x)|^{2}\omega_{2}^{\theta} dx\right)^{\frac{1}{2}},
\end{align}
where $0<\theta<1$. 

At this point invoke the reverse H\"{older} inequality for bilinear weights from Lemma~\ref{wei1}. This tells us that given a bilinear weight $\vec{\omega}\in A_{\vec{P}}$ there exists $\delta>0$ such that $\vec{\omega}_{\delta}=(\omega_{1}^{1+\delta},\omega_{2}^{1+\delta})\in A_{\vec{P}}$. Using the estimate \eqref{estimate} for $\vec{\omega}_{\delta}\in A_{\vec{P}}$ with  $\theta=\frac{1}{1+\delta}$ we get the desired result. 
\begin{eqnarray*}\label{critical}
	\| \mathcal {B}_{R(x)}^{\lambda}(f,g)\|_{L^{1}(v_{\omega})}
	&\leq & C \| f\|_{L^{2}(\omega_{1})}\| g\|_{L^{2}(\omega_{2})},
\end{eqnarray*} 
where $\lambda=(1-\frac{1}{1+\delta})\epsilon_{1}+\frac{1}{1+\delta}(n-\frac{1}{2}+\epsilon_{2})$. 

Finally, observe that we can choose $\epsilon_1$ and $\epsilon_2$ so that $\lambda=n-\frac{1}{2}.$ 
This completes the proof of Theorem~\ref{mainthm}. 
\qed
\section{Proof of Lemma~\ref{keylem1}}\label{sec:keylem1}
Let $z\in \C$ be such that $0<\text{Re}(z)<n-\frac{1}{2}$. Choose functions $\psi\in C^\infty_0[\frac{1}{2},2]$ and $\psi_0\in C_0^{\infty}[-\frac{3}{4},\frac{3}{4}]$ such that
$$1=\sum_{j\geq 2}\psi(2^j(1-t))+\psi_0(t),~ t\in[0,1].$$ 
This gives us 
\begin{eqnarray*}
	m_R^{z}(\xi,\eta)
	=\left(1-\frac{|\xi|^2+|\eta|^2}{R^2}\right)_+^{z}
	= \sum\limits_{j\geq 2}{m}^{z}_{j,R}(\xi,\eta)+m^{z}_{0,R}(\xi,\eta),
\end{eqnarray*}
where $$m^{z}_{j,R}(\xi,\eta)=\psi\left(2^j\left(1-\frac{|\xi|^2}{R^2}\right)\right)\left(1-\frac{|\xi|^2}{R^2}\right)_+^{z}\left(1-\frac{|\eta|^2}{R^2}\left(1-\frac{|\xi|^2}{R^2}\right)^{-1}\right)^{z}_+$$ 
and 
$$m^{z}_{0,R}(\xi,\eta)=\psi_0\left(\frac{|\xi|^2}{R^2}\right)\left(1-\frac{|\xi|^2+|\eta|^2}{R^2}\right)_+^{z}.$$

Let $\mathcal{B}_{j,R}^{z}$ denote the bilinear multiplier operator associated with $m^{z}_{j,R}(\xi,\eta)$, i.e., 
\begin{equation*}\mathcal{B}_{j,R}^{z}(f,g)(x)=\int_{\R^n}\int_{\R^n}m^{z}_{j,R}(\xi,\eta)\hat{f}(\xi)\hat{g}(\eta)e^{2\pi ix\cdot (\xi+\eta)}d\xi d\eta.
\end{equation*}
Let $\beta>\frac{1}{2}$ and note that $\text{Re}(z)-\beta>-\frac{1}{2}$. Using the decomposition of the bilinear Bochner-Riesz multiplier from~Kaur and Shrivastava~[\cite{JS}, Section $3$] we have  the following representation 
\begin{eqnarray*}\label{decomope}
    \B^{z}_{j,R}(f,g)(x)
	&=& c_{z}\int_0^{\sqrt{2^{-j+1}}}S_{j,\beta}^{R,t}f(x)B_{Rt}^{z-\beta}g(x)t^{2(z-\beta)+1}dt,
\end{eqnarray*}
where $c_z=\frac{\Gamma(z+1)}{\Gamma(\beta)\Gamma(z-\beta+1)}$,  $B_{Rt}^{z-\beta}$ is the linear Bochner-Riesz mean and 
$${S}_{j,\beta}^{R,t}f(x)=\int_{\R^n}\psi\left(2^j\left(1-\frac{|\xi|^2}{R^2}\right)\right)\left(1-\frac{|\xi|^2}{R^2}-t^2\right)_+^{\beta-1}\hat{f}(\xi)e^{2\pi ix\cdot\xi}d\xi.$$ 
Therefore, we need to prove the desired boundedness results for  maximal functions  
$$\mathcal{B}_{j,*}^{z}(f,g)(x)=\sup_{R>0}|\mathcal{B}_{j,R}^{z}(f,g)(x)|$$
for $j=0$ and $j\geq2$. 

The derivative (with respect to $z$) of $\mathcal{B}_{j,R}^{z}(f,g)(x)$ is given by 
\begin{eqnarray*}\label{keylem1:der}
	\partial_z \mathcal {B}^{z}_{j,R}(f,g)(x)&=& I_{j,R}+II_{j,R}+III_{j,R}
	\end{eqnarray*}
where 
\begin{eqnarray*}  
I_{j,R}	&=& (\partial_zc_{z})\left(\int_0^{\sqrt{2^{-j+1}}}S_{j,\beta}^{R,t}f(x)B_{Rt}^{z-\beta}g(x)t^{2(z-\beta)+1}dt\right)
\end{eqnarray*}
	\begin{eqnarray*} II_{j,R}&=& c_{z}\int_0^{\sqrt{2^{-j+1}}}S_{j,\beta}^{R,t}f(x)\tilde{B}_{Rt}^{z-\beta}g(x)t^{2(z-\beta)+1}dt
	\end{eqnarray*}
\begin{eqnarray*} III_{j,R}&=&
	 c_{z}\int_0^{\sqrt{2^{-j+1}}}S_{j,\beta}^{R,t}f(x)B_{Rt}^{z-\beta}g(x)t^{2(z-\beta)+1}\log tdt
\end{eqnarray*}
where  $$\tilde{B}_t^{z-\beta}g(x)=\int_{\R^n}\hat{g}(\eta)\left(1-\frac{|\eta|^2}{t^2}\right)_+^{z-\beta}\log\left(1-\frac{|\eta|^2}{t^2}\right)_+ e^{2\pi ix.\eta} d\eta.$$

We will prove estimates for maximal functions associated with each of the terms above separately. Let us first record the bounds for constant $c_z$ and its derivative $\partial_zc_{z}$. 

Write $z=\alpha+i\tau$. Using estimates of gamma function, see~ [\cite{Grafakosclassical}, pages 569-570], we know that  
\begin{eqnarray*}
	|\Gamma(\alpha+1+i \tau)|\leq |\Gamma(\alpha+1)|\quad\text{and}\quad \frac{1}{|\Gamma(\alpha-\beta+1+i \tau)|}\leq \frac{e^{C_{\alpha,\beta}|\tau|^{2}}}{|\Gamma(\alpha-\beta+1)|},
\end{eqnarray*}
where $C_{\alpha,\beta}=\max\{(1+\alpha-\beta)^{-2},(1+\alpha-\beta)^{-1}\}$.
Therefore $|c_z|$ increases at most by a constant multiple of $e^{\mathfrak{C}|\tau|^2}$ when $0<\alpha<n-\frac{1}{2}$, where  $\mathfrak{C}$ is a fixed constant.  
Next we estimate the growth of $|\partial_zc_z|$. We have
$$\partial_zc_z=\frac{1}{\Gamma(\beta)}\left(\frac{\Gamma(z-\beta+1)\Gamma'(z+1)-\Gamma(z+1)\Gamma'(z-\beta+1)}{\Gamma(z-\beta+1)^2}\right).$$

It is easy to see that for $\text{Re}(z)>0$
\begin{eqnarray*}
|\Gamma'(z)|
&\lesssim& |\Gamma(Re(z)-\epsilon)|+|\Gamma(Re(z)+1)|,
\end{eqnarray*}
where $\epsilon$ is small enough so that $\text{Re}(z)-\epsilon>0$. Further, using estimates of gamma function we can show that  
\begin{eqnarray*}
|\partial_zc_z|
&\lesssim& \frac{|\Gamma(z-\beta+1)\Gamma'(z+1)|+|\Gamma(z+1)\Gamma'(z-\beta+1)|}{\Gamma(z-\beta+1)^2}\\
&\lesssim& \frac{|\Gamma(z-\beta+1)|(|\Gamma(Re(z)-\epsilon+1)|+|\Gamma(Re(z)+2)|)}{\Gamma(z-\beta+1)^2}\\
&&~+ \frac{|\Gamma(z-\beta+1)|(|\Gamma(Re(z)-\beta+1-\epsilon)||+|\Gamma(Re(z)-\beta+2)|}{\Gamma(z-\beta+1)^2}\\
&\lesssim& C_{Re(z)}e^{2C_{\alpha,\beta}|\tau|^{2}}.
\end{eqnarray*}
Therefore, $|\partial_zc_z|$ also increases at most by a constant multiple of $e^{\mathfrak{C}|\tau|^2}$.

\noindent {\bf Estimate for the term $I_{j,R}$:}~Note that $\Gamma(z+1)$ and $\frac{1}{\Gamma(z-\beta+1)}$ are analytic functions in the region  $0<Re(z)<n-\frac{1}{2}$. Applying Cauchy-Schwarz inequality twice and a change of variable argument in the second term we get that  

\begin{eqnarray*}
\left\|\sup_{R>0}|I_{j,R}|\right\|_1
&\leq&  2^{-\frac{j}{4}}|\partial_zc_{z}|\left\|\sup_{R>0}\left(\int_0^{\sqrt{2^{-j+1}}}|{S}_{j,\beta}^{R,t}f(x)t^{2({z-\beta})+1}|^2dt\right)^{1/2}\right\|_2\\
&& \left\|\sup_{R>0}\left(R_j^{-1}\int_0^{R_j}|B_t^{z-\beta}g(x)|^2dt\right)^{1/2}\right\|_2.
\end{eqnarray*}
Invoking Theorem 5.1 from~Kaur and Shrivastava~\cite{JS}) for $\beta>\frac{1}{2}$ we have 
$$\left\|\sup_{R>0}\left(\int_0^{\sqrt{2^{-j+1}}}|{S}_{j,\beta}^{R,t}f(x)t^{2(z-\beta)+1}|^2dt\right)^{1/2}\right\|_2\lesssim2^{j(\frac{1}{4}-Re(z)+\gamma)}\|f\|_2.$$
Also, the other operator satisfies the following $L^2-$estimate, see~[\cite{JS}, Lemma 4.3]. 
$$\left\|\sup_{R>0}\left(R_j^{-1}\int_0^{R_j}|B_t^{z-\beta}g(x)|^2dt\right)^{1/2}\right\|_2\lesssim \|f\|_2,~~\text{Re}({z-\beta})>-\frac{1}{2}.$$
Putting these estimates together we get that 
\begin{eqnarray*}
	\left\|\sup_{R>0}|I_{j,R}|\right\|_1
	&\lesssim& 2^{-j(\text{Re}(z)-\gamma)}e^{\mathfrak{C}|\text{Im}(z)|^2}\|f\|_2\|g\|_2.
\end{eqnarray*}
\noindent {\bf Estimate for the term $II_{j,R}$:}~~
As in the previous step the Cauchy-Schwarz inequality gives us 
\begin{eqnarray*}
\left\|\sup_{R>0}|II_{j,R}|\right\|_1
&\leq&  2^{-\frac{j}{4}}|c_z|\left\|\sup_{R>0}\left(\int_0^{\sqrt{2^{-j+1}}}|{S}_{j,\beta}^{R,t}f(x)t^{2(z-\beta)+1}|^2dt\right)^{1/2}\right\|_2\\
&&\nonumber \left\|\sup_{R>0}\left(R_j^{-1}\int_0^{R_j}|\tilde{B}_t^{z-\beta}g(x)|^2dt\right)^{1/2}\right\|_2.
\end{eqnarray*}
We already have the required bounds for the constant $c_z$ and the first term involving the operator  ${S}_{j,\beta}^{R,t}$. 

We claim that the following $L^2$ estimate holds 
\begin{eqnarray}\label{L2}
	\left\|\sup_{R>0}\left(R_j^{-1}\int_0^{R_j}|\tilde{B}_t^{\delta}g(x)|^2dt\right)^{1/2}\right\|_2 \lesssim  \|f\|_2,~~~\text{for~Re}(\delta)>-\frac{1}{2}.
\end{eqnarray}
Consequently, we get that 
\begin{eqnarray*}
	\left\|\sup_{R>0}|II_{j,R}|\right\|_1\lesssim  2^{-j(Re(z)-\gamma)}e^{\mathfrak{C}|Im(z)|^2}\|f\|_2\|g\|_2,~~~\text{Re}(z)-\beta>-\frac{1}{2}.
\end{eqnarray*}
We can use a trick from~Stein~\cite{SW} involving square function to prove~\eqref{L2}. Let $\text{Re}(\delta)>-\frac{1}{2}$ and choose $d$ such that $\text{Re}(\delta)+d>\frac{n-1}{2}$. Write 

$$\tilde{B}_t^{\delta}g=\sum_{k=1}^{d}\left(\tilde{B}_t^{\delta+k-1}g-\tilde{B}_t^{\delta+k}g\right) + \tilde{B}_t^{\delta+d}g.$$
		
This implies that  $$\left(\int_0^{R}|\tilde{B}_t^{\delta}g(x)|^2dt\right)^{1/2}\leq \sum_{k=1}^{d}\left(\int_0^{R}|\tilde{B}_t^{\delta+k}g(x)-\tilde{B}_t^{\delta+k-1}g(x)|^2dt\right)^{1/2} + \left(\int_0^{R}|\tilde{B}_t^{\delta+d}g(x)|^2dt\right)^{1/2}.$$
Observe that  $\sup\limits_{R>0}\left(R^{-1}\int_0^{R}|\tilde{B}_t^{\delta+k}g(x)-\tilde{B}_t^{\delta+k-1}g(x)|^2dt\right)^{1/2},~1\leq k\leq d$ is dominated by $\left(\int_0^{\infty}|\tilde{B}_t^{\delta+k}g(x)-\tilde{B}_t^{\delta+k-1}g(x)|^2 t^{-1}dt\right)^{1/2}.$ Using Plancherel's theorem, we get that 
\begin{eqnarray*}
&&\left\|\left(\int_0^{\infty}|\tilde{B}_t^{\delta+k}g(x)-\tilde{B}_t^{\delta+k-1}g(x)|^2 \frac{dt}{t}\right)^{1/2}\right\|_2^2\\
&=& \int_0^{\infty}\int_{\R^n}\left|\left(1-\frac{|\eta|^2}{t^2}\right)_+^{\delta+k}\frac{|\eta|^2}{t^2}\log\left(1-\frac{|\eta|^2}{t^2}\right)_+\hat{g}(\eta)\right|^2dx \frac{dt}{t}\\
&\lesssim & \int_0^{\infty}\int_{\R^n}\left|\left(1-\frac{|\eta|^2}{t^2}\right)_+^{\delta+k-\epsilon}\frac{|\eta|^2}{t^2}\hat{g}(\eta)\right|^2dx \frac{dt}{t}\\
%&=&c'\|G^{\delta+k-\epsilon}(g)\|_2^2\\
&\lesssim & \|g\|_2^2
\end{eqnarray*}
where we have chosen $\epsilon>0$ such that $\text{Re}(\delta)-\epsilon>-\frac{1}{2}$.

Finally, for the remaining term with $\text{Re}(\delta)+d>\frac{n-1}{2}$, one can easily verify that the kernel of $\tilde{B}_t^{\delta+d}$ is an integrable function. For, 
\begin{eqnarray*}
K_t^{\delta+d}(x)&=&\int_{\R^n}\left(1-\frac{|\eta|^2}{t^2}\right)_+^{\delta+d}\log\left(1-\frac{|\eta|^2}{t^2}\right)_+e^{2\pi ix\cdot\eta}d\eta\\
&=& t^n\int_0^\infty \int_{\mathbb{S}^{n-1}}(1-r^2)^{\delta+d}_+\log(1-r^2)_+e^{2\pi itx\cdot r\theta}d\theta r^{n-1}dr\\
&=& \frac{2\pi t^n}{|x|^\frac{n-2}{2}} \int_0^1 (1-r^2)^{\delta+d}_+\log(1-r^2)_+J_{\frac{n}{2}-1}(2\pi rt|x|)r^{\frac{n}{2}}dr\\
\end{eqnarray*}
Therefore, we get that 
\begin{eqnarray*}
|K_t^{\delta+d}(x)| &\leq& \frac{2\pi t^n}{|tx|^\frac{n-2}{2}}\sup_{r\in[0,1]}\{(1-r^2)^\epsilon_+\log(1-r^2)_+\} \int_0^1 (1-r^2)^{\delta+d-\epsilon}_+J_{\frac{n}{2}-1}(2\pi rt|x|)r^{\frac{n}{2}}dr\\
&=& Ct^n\frac{J_{\frac{n}{2}+\delta+d-\epsilon}(2\pi t|x|)}{|tx|^{\frac{n}{2}+\delta+d-\epsilon}},
\end{eqnarray*}
where $\epsilon$ is small enough so that $\delta+d-\epsilon>\frac{n-1}{2}$. Clearly, we get that 
$$\sup_{t>0}|\tilde{B}_t^{\delta+d}g(x)\leq c(\delta+d,n)Mg(x),$$
where $M$ is the classical Hardy-Littlewood maximal function. Consequently, we obtain the desired estimate~\eqref{L2}. \\

\noindent {\bf Estimate for the term $III_{j,R}$:}~~
In this case we have 
\begin{eqnarray*}
\left\|\sup_{R>0}|III_{j,R}|\right\|_1
&\leq&  2^{-j/4}|c_z|\left\|\sup_{R>0}\left(\int_0^{\sqrt{2^{-j+1}}}|{S}_{j,\beta}^{R,t}f(x)t^{2(z-\beta)+1}\log t|^2dt\right)^{1/2}\right\|_2\\
&&\nonumber \left\|\sup_{R>0}\left(R_j^{-1}\int_0^{R_j}|B_t^{z-\beta}g(x)|^2dt\right)^{1/2}\right\|_2.
\end{eqnarray*}
Following the discussion in the previous cases observe that we only need to deal with the term  involving ${S}_{j,\beta}^{R,t}$ in equation above. This can be done easily in the following manner. 

For $\text{Re}(z-\beta)>-\frac{1}{2}$, choose $\epsilon>0$ small enough so that $\text{Re}(2(z-\beta)+1)-\epsilon>0$. Then 
$$\sup_{R>0}\left(\int_0^{\sqrt{2^{-j+1}}}|{S}_{j,\beta}^{R,t}f(x)t^{2(z-\beta)+1}\log t|^2dt\right)^{1/2}\lesssim \sup_{R>0}\left(\int_0^{\sqrt{2^{-j+1}}}|{S}_{j,\beta}^{R,t}f(x)t^{2(z-\beta)+1-\epsilon}|^2dt\right)^{1/2}$$
as $t^\epsilon\log t$ is bounded in $[0,\sqrt{2^{-j+1}}]$. 
This reduces our job to a known situation as considered in~\cite{JS} and hence we have that  
$$\left\|\sup_{R>0}\left(\int_0^{\sqrt{2^{-j+1}}}|{S}_{j,\beta}^{R,t}f(x)t^{2(z-\beta)+1-\epsilon}|^2dt\right)^{1/2}\right\|_2\lesssim 2^{j(\frac{1}{4}-\text{Re}(z)+\epsilon+\gamma)}\|f\|_2,$$
for $\text{Re}(z)-\beta-\epsilon>-\frac{1}{2}$ and $\beta>\frac{1}{2}$. 
Putting the estimates above together, we get that 
$$\left\|\sup_{R>0}|III_{j,R}|\right\|_1\lesssim  2^{-j(\text{Re}(z)-\epsilon-\gamma)}e^{\mathfrak{C}|\text{Im}(z)|^2}\|f\|_2\|g\|_2.$$
Since $\text{Re}(z)>0$, we can sum over $j\geq 2$ in all the cases obtained as above. 

It remains to deal with the operator $\mathcal{B}_{0,*}^z$. \\

\noindent {\bf Boundedness of the operator $\mathcal{B}_{0,*}^z$:}
This part is dealt with similarly. We will make use of the decomposition of the multiplier $m_{0,R}^\alpha(\xi,\eta)$ as carried out Kaur and Shrivastava~[\cite{JS}, Section 5.3]. This time we decompose  $m_{0,R}^\alpha(\xi,\eta)$ with respect to $\eta$ variable. Note that previously we did the same with respect to $\xi$ variable. This gives us  

\begin{eqnarray*}
m_{0,R}^\alpha(\xi,\eta)&=&\sum_{j\geq0}\Tilde{m}_{j,R}^\alpha(\xi,\eta)
\end{eqnarray*}
where
$$\Tilde{m}_{j,R}^\alpha(\xi,\eta)=\psi_0\left(\frac{|\xi|^2}{R^2}\right)\psi\left(2^j\left(1-\frac{|\eta|^2}{R^2}\right)\right)\left(1-\frac{|\eta|^2}{R^2}\right)_+^{\alpha}\left(1-\frac{|\xi|^2}{R^2}\left(1-\frac{|\eta|^2}{R^2}\right)^{-1}\right)^{\alpha}_+$$
for $j\geq2$, and for $j=0,1$
$$\Tilde{m}_{j,R}^\alpha(\xi,\eta)=\psi_0\left(\frac{|\xi|^2}{R^2}\right)\psi_0^{j+1}\left(\frac{|\eta|^2}{R^2}\right)\left(1-\frac{|\xi|^2+|\eta|^2}{R^2}\right)_+^{\alpha}.$$
Here $\psi_0, \psi_0^1,$ and $ \psi_0^2$ are smooth functions supported in  $[0,3/4], [0,3/16],$ and $[\frac{3}{32},\frac{3}{4}]$  respectively. Also, they satisfy the identity $\psi_0(x)=\psi_0^1(x)+\psi_0^2(x)$. 

Let $\tilde{\B}^{\alpha}_{j,R}$ denote the bilinear multiplier operator associated with $\tilde{m}^{\alpha}_{j,R}(\xi,\eta)$ and let $\tilde{\B}^{\alpha}_{j,*}$ denote the corresponding maximal function. 

We will deal with maximal function $\tilde{\B}^{z}_{j*}, j\geq 0$ separately. 

Consider the case of $j=0$ first. In this case the multiplier is given by 
$$\tilde{m}_{0,R}^\alpha=\psi_0\left(\frac{|\xi|^2}{R^2}\right)\psi_0^1\left(\frac{|\eta|^2}{R^2}\right)\left(1-\frac{|\xi|^2+|\eta|^2}{R^2}\right)_+^{\alpha}.$$
Taking the derivative we see that  the bilinear multiplier for the operator $\left(\partial_z\right) \tilde{\B}^{z}_{0,R}$ is given by
$$M(\xi,\eta)=\psi_0\left(\frac{|\xi|^2}{R^2}\right)\psi_0^1\left(\frac{|\eta|^2}{R^2}\right)\left(1-\frac{|\xi|^2+|\eta|^2}{R^2}\right)_+^{z}\log\left(1-\frac{|\xi|^2+|\eta|^2}{R^2}\right)_+.$$
Observe that  $\psi_0\left(\frac{|\xi|^2}{R^2}\right)\psi_0^1\left(\frac{|\eta|^2}{R^2}\right)$ is a smooth function with its support in a ball of radius $\sqrt{\frac{15}{16}}R$ and observe that  $\left(1-\frac{|\xi|^2+|\eta|^2}{R^2}\right)_+^{z}\log\left(1-\frac{|\xi|^2+|\eta|^2}{R^2}\right)_+$ is a smooth function on this set. Therefore, using standard argument we can show that $\tilde{\B}^{z}_{0,*}$ is dominated by the bilinear Hardy-Littlewood maximal function which is defined by 
$$\M(f,g)(x):=\sup_{t>0}\frac{1}{|B(x,t)|^2}\int_{B(x,t)} |f(y)|dy\int_{B(x,t)} |g(z)|dz.$$
We refer to~\cite{Ler1} for more details about the maximal function $\M$. 
The $L^p$ boundedness of $\M$ yields the desired estimates for $\tilde{\B}^{z}_{0,*}$. 

Next, for $j=1$ Stein's identity [\cite{SW}, page 278] allows us to express 
$$\tilde{m}_{1,R}^\alpha(\xi,\eta)=\psi_0\left(\frac{|\xi|^2}{R^2}\right)\psi_0^2\left(\frac{|\eta|^2}{R^2}\right)R^{-2\alpha}\int_0^{uR}\left(R^2\varphi_R(\eta)-t^2\right)_+^{\beta-1}t^{2\delta+1}\left(1-\frac{|\xi|^2}{t^2}\right)^{\delta}_+dt,$$
where $\varphi_R(\eta)=\left(1-\frac{|\eta|^2}{R^2}\right)_+$ and $u=\sqrt{\frac{29}{32}}$. Therefore, 
\begin{eqnarray*} 
    \tilde{\B}^{z}_{1,R}(f,g)(x)&=&c_{z}\int_0^{u}H^{\beta}_{R,t}g(x)B_{Rt}^{z-\beta}f(x)t^{2(z-\beta)+1}dt,
\end{eqnarray*}
where $H_{R,t}^{\beta}g(x)=\int_{\R^n}\psi_0^2\left(\frac{|\eta|^2}{R^2}\right)\left(1-t^2-\frac{|\eta|^2}{R^2}\right)_+^{\beta-1}\hat{g}(\eta)e^{2\pi ix.\eta} d\eta$. 

This gives us that 
\begin{eqnarray}\label{j=1}
    \partial_z \tilde{\B}^{z}_{1,R}(f,g)(x)&=& \left(\partial_zc_{z}\right)\int_0^{u}H^{\beta}_{R,t}g(x)B_{Rt}^{z-\beta}f(x)t^{2(z-\beta)+1}dt\\
    &&\nonumber ~+ c_{z}\int_0^{u}H^{\beta}_{R,t}g(x)\tilde{B}_{Rt}^{z-\beta}f(x)t^{2(z-\beta)+1}dt\\
    &&\nonumber~+ c_{z}\int_0^{u}H^{\beta}_{R,t}g(x)B_{Rt}^{z-\beta}f(x)t^{2(z-\beta)+1}\log tdt.
\end{eqnarray}
Note that from this point onward the requires estimate can be deduced by following the corresponding argument (as in the case of $\B_{j,*}^z$) from the previous section along with $L^2$-estimate for the operator $f\rightarrow \left(\sup_{R>0}\int_0^{u}|H^{\beta}_{R,t}g(x)t^{2\delta+1}|^2 dt\right)^{1/2}$ from [\cite{JS}, Section 5.3]

Finally, when $j\geq2$, notice that $\tilde{m}^{\alpha}_{j,R}(\xi,\eta)$ is similar to $m^{\alpha}_{j,R}(\xi,\eta)$ except that there is an extra factor of $\psi_0\left(\frac{|\xi|^2}{R^2}\right)$ present in $\tilde{m}^{\alpha}_{j,R}(\xi,\eta)$. Let $K\in \N$ be such that $\{\xi: (\xi,\eta)\in \supp(\tilde{m}^{\alpha}_{j,R})\}\subseteq \{\xi:|\xi|\leq \frac{R}{8}\}, j\geq K$. We can assume that $\psi_0\left(\frac{|\xi|^2}{R^2}\right)= 1~\text{for}~|\xi|\leq \frac{R}{8}.$ Therefore, for $j\geq K$ the maximal function $\tilde{\B}^{\alpha}_{j,*}$ behaves the same way as $\B^{\alpha}_{j,*}$ and hence the desired results follow in this situation. 

For $2\leq j<K$, using Stein's identity once again we can write  
\begin{eqnarray*}
    \tilde{\B}^{z}_{j,R}(f,g)(x)
	&=& c_{z}\int_0^{\sqrt{2^{-j+1}}}S_{j,\beta}^{R,t}g(x)B^{\psi_0}_{R}B_{Rt}^{z-\beta}f(x)t^{2(z-\beta)+1}dt,
\end{eqnarray*}
where $c_z=\frac{\Gamma(z+1)}{\Gamma(\beta)\Gamma(z-\beta+1)}$ and $B^{\psi_0}_{R}f(x)=\int_{\R^n}\psi_0\left(\frac{|\xi|^2}{R^2}\right)\hat{f}(\xi)e^{2\pi ix.\xi} d\xi.$
Therefore, 
\begin{eqnarray*}
    \partial_z \tilde{\B}^{z}_{j,R}(f,g)(x)&=& \left(\partial_zc_{z}\right)\int_0^{\sqrt{2^{-j+1}}}S_{j,\beta}^{R,t}g(x)B^{\psi_0}_{R}B_{Rt}^{z-\beta}f(x)t^{2(z-\beta)+1}dt\\
    &&~+ c_{z}\int_0^{\sqrt{2^{-j+1}}}S_{j,\beta}^{R,t}g(x)B^{\psi_0}_{R}\tilde{B}_{Rt}^{z-\beta}f(x)t^{2(z-\beta)+1}dt\\
    &&~+ c_{z}\int_0^{\sqrt{2^{-j+1}}}S_{j,\beta}^{R,t}g(x)B^{\psi_0}_{R}B_{Rt}^{z-\beta}f(x)t^{2(z-\beta)+1}\log tdt
\end{eqnarray*}
The $L^2\times L^2\to L^1$-boundedness of the maximal functions associated with all the three terms can be proved similarly as in the previous case except that in place of estimate ~\eqref{L2} here we will require the following $L^2-$estimate 
\begin{eqnarray}\label{L21}
	\left\|\sup_{R>0}\left(R^{-1}\int_0^{R}|B^{\psi_0}_{R}\tilde{B}_t^{\delta}f(x)|^2dt\right)^{1/2}\right\|_2\lesssim \|f\|_2~~\text{for}~ ~\text{Re}(\delta)>-\frac{1}{2}.
	\end{eqnarray}
This is proved combining the estimate \eqref{L2} along with the estimate $$\sup\limits_{R>0}|B^{\psi_0}_{R}f(x)|\lesssim M(f)(x).$$ Note that the later assertion holds because $\psi_0$ is a compactly supported smooth function. 
This completes the proof of Lemma~\ref{keylem1}. 
\qed
\section{End-point estimates for the square function  \texorpdfstring{$\G^{n-\frac{1}{2}}$}{G}}\label{sec:sqrexam}
Recall that the kernel (in the sense of vector-valued operator) of square function $\G^{\alpha}$ is given by
\begin{eqnarray*}
{\mathcal K^{\alpha}_t}(y_1,y_2) &=& c_{n,\alpha}t^{2n-2}\Delta\left(\frac{ J_{\alpha+n} (|t(y_1,y_2)|)} {|t(y_1,y_2)|^{\alpha+n}}\right)\\
&=&c_{n,\alpha}t^{2n}\left(\frac{J_{n+\alpha}(2\pi |t(y_1,y_2)|)}{(|t(y_1,y_2)|)^{n+\alpha}}-\frac{J_{n+\alpha+1}(2\pi |t(y_1,y_2)|)}{(|t(y_1,y_2)|)^{n+\alpha+1}}\right).
\end{eqnarray*}

Using the asymptotics of Bessel functions for large $|x|$ and $t\geq1$, we have
$$J_{n+\alpha}(2\pi t|x|)=\frac{\cos(2\pi t|x|+\frac{\pi}{2}(n+\alpha)+\frac{\pi}{4})}{\pi\sqrt{t|x|}}+O((t|x|)^{-\frac{3}{2}}).$$
Then,
$$\mathcal{K}^{n-\frac{1}{2}}_t(y_1,y_2)=c_{n}\left(\frac{\cos(2\pi |t(y_1,y_2)|+n\pi)}{{|(y_1,y_2)|}^{2n}}+O\left(\frac{1}{|(y_1,y_2)|^{2n+1}}\right)\right).$$

Let $\psi\in \mathcal S(\R^n)$ be such that $\supp(\hat{\psi})$ is contained in $B(0,2)$ and $\hat{\psi}(\xi)=1$ in $B(0,1)$. Let $\psi_N(x)=N^n\psi(N^nx).$ Consider 
\begin{eqnarray*}
\G^{n-\frac{1}{2}}(\psi_N,\psi_N)(x) &\geq& \left(\int_1^N\left|\int_{\R^n\times\R^n}\left(1-\frac{|\xi|^2+|\eta|^2}{t^2}\right)^{n-\frac{1}{2}}_+\frac{|\xi|^2+|\eta|^2}{t^2}\hat{\psi}_N(\xi)\hat{\psi}_N(\eta)e^{2\pi ix\cdot(\xi+\eta)}d\xi\eta\right|^2\frac{dt}{t}\right)^{\frac{1}{2}}\\
&=& \left(\int_1^N\left|\K^{n-\frac{1}{2}}_t(x,x)\right|^2\frac{dt}{t}\right)^{\frac{1}{2}}\\
&\gtrsim & \frac{1}{|(x,x)|^{2n}}\left(\int_1^N|\cos(2\pi t|(x,x)|)|^2\frac{dt}{t}\right)^{\frac{1}{2}}\\
&= & \frac{1}{|(x,x)|^{2n}}\left(\int_{|(x,x)|}^{N|(x,x)|}|\cos(2\pi t)|^2\frac{dt}{t}\right)^{\frac{1}{2}}\\
&\gtrsim &  \frac{\log N}{|(x,x)|^{2n}}.
\end{eqnarray*}
Since $\|\psi_N\|_1=1$, we conclude that $\|\G^{n-\frac{1}{2}}(\psi_N,\psi_N)\|_{\frac{1}{2},\infty}\gtrsim \log N$. Therefore, $\G^{n-\frac{1}{2}}$ cannot be bounded from $L^1(\R^n)\times L^1(\R^n)$ to $L^{\frac{1}{2},\infty}(\R^n)$. This proves Proposition~\ref{endpoint:sf}.
\qed

\section{Weighted estimates for the square function \texorpdfstring{$\G^{n-\frac{1}{2}}$}{G1}}\label{sec:sqr}
In this section we prove Theorem~\ref{mainthm} for $T= \G^{n-\frac{1}{2}}$. The scheme of proof is exactly the same as in the case of maximal function $\B^{n-\frac{1}{2}}_*$. However, some of the estimates require different arguments. We will point out only the differences to avoid repetition. The $L^p$ boundedness of $\G^{\alpha}$ for a wide range exponents is proved in~\cite{CKSS}. We will exploit the techniques developed in~\cite{CKSS} to prove our proofs. 

First note that in view of the multilinear extrapolation theorem from \cite{KJS}, it is enough to prove the main Theorem~\ref{mainthm} for $\vec{P}=(2,2)$ and all weights in the corresponding class of bilinear weights. More precisely, we need to prove the following. 

\begin{theorem} \label{mainthm3} The bilinear Bochner-Riesz operator $\G^{n-\frac{1}{2}}$ is bounded from  $L^{2}(\omega_{1})\times L^{2}(\omega_{2})\rightarrow L^{1}(v_{\omega})$ for all $\vec{\omega}\in A_{\vec{P}},$ where $\vec{P}=(2,2).$
\end{theorem}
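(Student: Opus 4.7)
The plan is to mirror the proof of Theorem~\ref{mainthm} for $\B^{n-\frac{1}{2}}_*$ in Section~\ref{max:sec}, replacing its central $L^2\times L^2\to L^1$ derivative estimate (Lemma~\ref{keylem1}) by the corresponding statement for the square function. By the multilinear extrapolation theorem from~\cite{KJS}, it suffices to prove the weighted bound at $\vec{P}=(2,2)$.

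I would first extend $\G^{\alpha}$ to an analytic family $\G^z$ for $\text{Re}(z)>0$ by allowing the Bochner--Riesz exponent to be complex, and set up the holomorphic auxiliary function
\[
\psi(z)=\int_{\R^n}\G^{(1-z)\epsilon_1+z(n-\frac{1}{2}+\epsilon_2)}(f^z_{\delta_0},g^z_{\delta_0})(x)\,(v_N(x))^z\,h(x)\,e^{A z^2}\,dx,
\]
with the same truncation $v_N$ and modifications $f^z_{\delta_0},g^z_{\delta_0}$ as in Section~\ref{max:sec}. The three lines lemma then reduces the task to two boundary estimates: an \emph{unweighted} endpoint $\|\G^z(f,g)\|_{L^1}\lesssim e^{\mathfrak{C}|\text{Im}(z)|^2}\|f\|_2\|g\|_2$ for $0<\text{Re}(z)<n-\frac{1}{2}$ (the analog of Lemma~\ref{keylem1} for $\G^z$), and a \emph{weighted} $L^2(\omega_1)\times L^2(\omega_2)\to L^1(v_\omega)$ bound for $\G^z$ when $\text{Re}(z)>n-\frac{1}{2}$. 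The latter should follow by combining the $L^p$ boundedness of $\G^{\alpha}$ above the critical index from~\cite{CKSS} with standard weighted bilinear Calder\'on--Zygmund theory, since in that range the kernel of $\G^z$ is smooth and decaying enough to fit into that framework.

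The crux, and the step I expect to be the main obstacle, is the endpoint derivative estimate. I would decompose $(1-(|\xi|^2+|\eta|^2)/R^2)^z_+$ into dyadic pieces via the partition of unity used in Section~\ref{sec:keylem1} and, for each piece, apply the integral representation
\[
\B^z_{j,R}(f,g)(x)=c_z\int_0^{\sqrt{2^{-j+1}}}S^{R,t}_{j,\beta}f(x)\,B^{z-\beta}_{Rt}g(x)\,t^{2(z-\beta)+1}\,dt.
\]
Unlike the maximal case, $\G^z$ carries an $L^2(dR/R)$-norm rather than a supremum in $R$, so after differentiating in $z$ (producing three terms $I_{j,R},II_{j,R},III_{j,R}$ involving the factors $\partial_z c_z$, $\tilde{B}^{z-\beta}_{Rt}$, and $\log t$, respectively), I would apply Minkowski's integral inequality to pull the $L^2(dR/R)$-norm inside the $t$-integral and then split by Cauchy--Schwarz in $t$. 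This reduces each piece to an $L^2$-estimate for a local square function in $f$ (as in~\cite{JS,CKSS}) multiplied by an $L^2$-estimate for a Bochner--Riesz-type local square function in $g$, both of which are available from~\cite{CKSS}. Summability over $j\geq 2$ is ensured by the $2^{-j(\text{Re}(z)-\gamma)}$ decay coming from $t^{2(z-\beta)+1}$, and the logarithmic factor appearing in $III_{j,R}$ is absorbed via $t^{\epsilon}|\log t|\in L^\infty([0,\sqrt{2^{-j+1}}])$ exactly as in Section~\ref{sec:keylem1}. The low-frequency piece $\G^z_0$ is treated by the same further splitting $\tilde m^\alpha_{0,R},\tilde m^\alpha_{1,R},\tilde m^\alpha_{j,R}$, $j\geq 2$, used for $\B^z_{0,*}$.

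Once the endpoint derivative estimate is established, the three lines lemma combined with Lemma~\ref{wei1} (reverse H\"older for bilinear weights) to match the exponent exactly to $n-\frac{1}{2}$ completes the proof in the same manner as in Section~\ref{max:sec}, with constants independent of the linearizing ingredients.
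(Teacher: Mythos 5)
Your architecture matches the paper exactly: extrapolation to reduce to $\vec P=(2,2)$, linearize the square function (the paper uses a function $b(x,R)\in L^2(dR/R)$ in place of your linearization of the supremum), set up the analytic family $\tilde{\mathcal T}^{z,\epsilon_1,\epsilon_2,N}_b$, invoke the three lines lemma with the weighted bound for $\mathrm{Re}(z)>n-\tfrac12$ from \cite{CKSS}, the unweighted $L^2\times L^2\to L^1$ bound for $0<\mathrm{Re}(z)<n-\tfrac12$ from \cite{CKSS}, the derivative estimate for analyticity (the paper's Lemma~\ref{keylem2}), and finish with the reverse H\"older Lemma~\ref{wei1}. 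That is the paper's scheme.

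However, there is a concrete gap in the step you yourself identify as the crux. You propose to ``decompose $(1-(|\xi|^2+|\eta|^2)/R^2)^z_+$'' and use
\[
\B^z_{j,R}(f,g)(x)=c_z\int_0^{\sqrt{2^{-j+1}}}S^{R,t}_{j,\beta}f(x)\,B^{z-\beta}_{Rt}g(x)\,t^{2(z-\beta)+1}\,dt,
\]
but this is the representation for the Bochner--Riesz multiplier, not for the square function. The Fourier multiplier defining $\G^\alpha$ (equivalently $\widehat{\K^\alpha_R}$) carries the extra factor $\frac{|\xi|^2+|\eta|^2}{R^2}$, and distributing this factor between $\xi$ and $\eta$ is not cosmetic: it produces the two--term representation
\[
\g^{z}_{j,R}(f,g)(x)
	= c_{z}\int_0^{\sqrt{2^{-j+1}}}\bigl[S_{j,\beta}^{R,t}f(x)\,A_{Rt}^{z-\beta}g(x)+\tilde{S}_{j,\beta}^{R,t}f(x)\,B_{Rt}^{z-\beta}g(x)\bigr]t^{2(z-\beta)+1}dt,
\]
with new operators $\tilde{S}_{j,\beta}^{R,t}$ and $A^{\delta}_{Rt}$. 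Consequently, differentiating in $z$ yields not only $\tilde B^{z-\beta}$ but also the operator $\tilde A^{z-\beta}$ with symbol $(1-|\eta|^2/t^2)^{z-\beta}_+\frac{|\eta|^2}{t^2}\log(1-|\eta|^2/t^2)_+$, and one must prove a separate $L^2$ square-function estimate for $\tilde A^\delta$ (Proposition~\ref{sqf2} in the paper). Your three derivative terms ($\partial_z c_z$, $\tilde B$, $\log t$) are the ones from the maximal-function case; for $\G^z$ you need six terms, and the ones carrying $\tilde A$ and $\tilde S$ are precisely the ones your scheme omits. The Minkowski/Cauchy--Schwarz reduction you describe is the right shape, but applied to the wrong integral representation it does not produce a valid proof without this correction. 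The treatment of $\G^z_0$ has the analogous modification: the operators $H^{\beta}_{R,t}$, $\tilde H^{\beta}_{R,t}$, and the composition with $B^{\psi_0}_R$ must carry the extra factor through.

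A smaller imprecision: at the $\mathrm{Re}(z)=1$ boundary the paper does not re-derive the weighted bound from Calder\'on--Zygmund theory; it quotes the weighted estimate for $\G^z$, $\mathrm{Re}(z)>n-\tfrac12$, directly from \cite{CKSS} (Theorem~\ref{weighted1}). Your alternative derivation is plausible but not needed.
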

Moreover, following the discussion in Section~\ref{max:sec} in order to prove Theorem~\ref{mainthm3} we will require weighted estimates for $\G^{\alpha}$ when $\alpha>n-\frac{1}{2}$, $L^2\times L^2 \rightarrow L^1$ boundedness of $\G^{\alpha}$ for $0<\text{Re}(\alpha)<n-\frac{1}{2}$ and an analogue of the key Lemma~\ref{keylem1} in the context of square function. The weighted estimates for $\G^{\alpha}, \alpha>n-\frac{1}{2}$ are known from \cite{CKSS} as follows
\begin{theorem}\cite{CKSS}\label{weighted1}
Let $n\geq 1$ and $z\in \mathbb C$ be such that $\text{Re}(z)>n-\frac{1}{2}.$ Then the operator $\G^{z}$ is bounded from $L^{p_{1}}(\omega_{1})\times L^{p_{2}}(\omega_{2})\rightarrow L^{p}(v_{\omega})$ for all $\vec{\omega}\in A_{\vec{P}}$ with $1<p_{1}, p_{2}<\infty$ and $\frac{1}{p_{1}}+\frac{1}{p_{2}}=\frac{1}{p}$.
\end{theorem}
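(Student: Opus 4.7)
The plan is to realize $\G^{z}$ as an $H$-valued bilinear Calder\'on-Zygmund operator with $H=L^{2}\bigl((0,\infty),\tfrac{dR}{R}\bigr)$ and invoke the weighted multilinear Calder\'on-Zygmund theorem of Lerner-Ombrosi-P\'erez-Torres-Trujillo-Gonz\'alez. Setting $\mathbf{K}^{z}(y):=(\mathcal K^{z}_R(y))_{R>0}\in H$ for $y=(y_1,y_2)\in\R^{2n}\setminus\{0\}$, one has
\[
\G^{z}(f,g)(x)=\bigl\|\mathbf K^{z}*(f\otimes g)(x,x)\bigr\|_{H},
\]
so it suffices to verify $H$-valued bilinear Calder\'on-Zygmund size/regularity estimates for $\mathbf K^{z}$ on $\R^{n}\times\R^{n}$ together with a single unweighted $L^{p_1}\times L^{p_2}\to L^{p}$ bound as the initial hypothesis of the multilinear CZ machinery.

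First, I would derive the pointwise estimate on the kernel. Starting from the spatial form
\[
\mathcal K^{z}_R(y)=c_{n,z}\,R^{2n}\!\left(\tfrac{J_{n+z}(2\pi R|y|)}{(R|y|)^{n+z}}-\tfrac{J_{n+z+1}(2\pi R|y|)}{(R|y|)^{n+z+1}}\right),
\]
the small-argument behaviour $J_{\nu}(t)\sim t^{\nu}$ and the standard large-argument asymptotic $J_{\nu}(t)=t^{-1/2}(c_1e^{it}+c_2e^{-it})+O(t^{-3/2})$ yield
\[
|\mathcal K^{z}_R(y)|\lesssim \min\bigl\{R^{2n},\,R^{n-\mathrm{Re}(z)-1/2}|y|^{-n-\mathrm{Re}(z)-1/2}\bigr\},
\]
with analogous bounds for $|\nabla^{\gamma}_y\mathcal K^{z}_R(y)|$ acquiring an extra factor $R^{|\gamma|}$. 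Splitting $\int_0^\infty|\cdot|^2\tfrac{dR}{R}$ at $R=|y|^{-1}$, the hypothesis $\mathrm{Re}(z)>n-\tfrac{1}{2}$ is exactly the condition that makes both pieces converge, producing
\[
\|\mathbf K^{z}(y)\|_H\lesssim |y|^{-2n},\qquad \|\nabla^{\gamma}_y\mathbf K^{z}(y)\|_H\lesssim |y|^{-2n-|\gamma|}.
\]
These are precisely the correct $H$-valued bilinear Calder\'on-Zygmund size and smoothness estimates in $\R^n\times\R^n$ (whose total ambient dimension is $2n$).

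Second, one needs a single unweighted $L^{p_1}\times L^{p_2}\to L^{p}$ bound; for $\mathrm{Re}(z)>n-\tfrac{1}{2}$ the estimate $\G^{z}:L^{2}\times L^{2}\to L^{1}$ follows directly from Plancherel applied to the multiplier $2(z+1)\tfrac{|\xi|^2+|\eta|^2}{R^2}(1-\tfrac{|\xi|^2+|\eta|^2}{R^2})^{z}_+$ together with an orthogonality argument in $R$, and is in any case already provided by \cite{CKSS}. Combining the $H$-valued bilinear Calder\'on-Zygmund structure above with this initial boundedness, the multilinear weighted Calder\'on-Zygmund theorem transfers this single bound to the full $A_{\vec P}$-weighted range, delivering the stated conclusion for all H\"older tuples with $1<p_1,p_2<\infty$.

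The main obstacle is the uniform-in-$R$ control of the regularity estimates: differentiation in $y$ produces several Bessel-function terms of higher orders whose oscillation has to combine with the powers of $R$ so that the $L^{2}(dR/R)$ norm of $\nabla_{y}^{\gamma}\mathbf K^{z}$ still closes at the critical decay $|y|^{-2n-|\gamma|}$. The threshold $\mathrm{Re}(z)>n-\tfrac{1}{2}$ is exactly what forces convergence of the $R$-integral at $R=\infty$ in both the size and smoothness bounds, and would fail at the critical index, reflecting the genuine end-point obstruction established in Proposition~\ref{endpoint:sf}.
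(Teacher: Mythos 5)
The paper does not prove this statement; it is imported verbatim from \cite{CKSS}, so there is no internal proof to compare against. Your overall strategy --- view $\G^{z}$ as an $L^{2}(dR/R)$-valued bilinear Calder\'on--Zygmund operator in ambient dimension $2n$ and feed the kernel estimates plus one unweighted bound into the weighted multilinear theory of \cite{Ler1} --- is the natural one, and your size estimate is correct: splitting the $R$-integral at $R=|y|^{-1}$, the tail converges exactly when $\mathrm{Re}(z)>n-\tfrac12$ and yields $\|\mathbf K^{z}(y)\|_{H}\lesssim|y|^{-2n}$.

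The gap is in the regularity estimate. The claim $\|\nabla_{y}\mathbf K^{z}(y)\|_{H}\lesssim|y|^{-2n-1}$ is \emph{false} for $n-\tfrac12<\mathrm{Re}(z)\le n+\tfrac12$, which is precisely the range the paper needs (the interpolation argument uses $z$ with real part $n-\tfrac12+\epsilon_2$). Indeed, using $\frac{d}{dt}\bigl(t^{-\nu}J_{\nu}(t)\bigr)=-t^{-\nu}J_{\nu+1}(t)$, one derivative in $y$ costs a factor $R^{2}|y|$ while raising the Bessel order by one only improves the large-argument decay by $(R|y|)^{-1}$; the net effect is $|\nabla_{y}\mathcal K^{z}_{R}(y)|\approx R^{\,n-\mathrm{Re}(z)+1/2}|y|^{-n-\mathrm{Re}(z)-1/2}$ in the oscillatory regime, so $\int_{1/|y|}^{\infty}|\nabla_{y}\mathcal K^{z}_{R}(y)|^{2}\frac{dR}{R}$ diverges at $R=\infty$ unless $\mathrm{Re}(z)>n+\tfrac12$: the $H$-norm of the gradient is actually infinite in the range you need, so the smoothness hypothesis as you state it cannot be verified. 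The argument can be repaired, but it requires the H\"older form of the regularity condition rather than the gradient form: for $|y-y'|\le|y|/2$ split the $R$-integration at $R_{0}=|y-y'|^{-1}$, apply the mean value theorem (hence the gradient bound) only on $R\le R_{0}$ and the size bound on each of $\mathbf K^{z}(y),\mathbf K^{z}(y')$ separately on $R>R_{0}$; optimizing gives $\|\mathbf K^{z}(y)-\mathbf K^{z}(y')\|_{H}\lesssim|y-y'|^{\epsilon}|y|^{-2n-\epsilon}$ with $\epsilon=\mathrm{Re}(z)-(n-\tfrac12)$, which suffices for the multilinear weighted theory. Separately, your assertion that the initial $L^{2}\times L^{2}\to L^{1}$ bound ``follows directly from Plancherel'' is too quick --- Plancherel does not directly control a bilinear $L^{2}\times L^{2}\to L^{1}$ norm, and a genuine decomposition is needed --- but since you also defer that input to \cite{CKSS}, this is a presentational rather than a logical issue.
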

Also, $L^2\times L^2 \rightarrow L^1$ boundedness of $\G^{\alpha}$ for $0<\text{Re}(\alpha)<n-\frac{1}{2}$ has been obtained in~\cite{CKSS}.  Therefore, we need to establish the following analogue of the key lemma, Lemma~\ref{keylem1}.
\begin{lemma}\label{keylem2}
Let $n\geq 1$ and $z$ be a complex number such that $0< \text{Re}(z)< n-\frac{1}{2}$.  Then the following holds 
\begin{eqnarray*} \int_{\mathbb{R}^n} |\partial_z \G^{z}(f,g)(x)| dx &\leq & C_{n+\text{Re}(z)} e^{\mathfrak{C} | \text{Im}(z)|^{2}}\| f\|_{2}\| g\|_{2}, 
\end{eqnarray*}
where $\mathfrak{C}>0$ is a constant.
\end{lemma}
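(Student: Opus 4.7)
The plan is to adapt the proof of Lemma~\ref{keylem1} essentially verbatim, with the supremum norm $\sup_{R>0}|\cdot|$ replaced throughout by the square-function norm $\left(\int_0^\infty|\cdot|^2\,\frac{dR}{R}\right)^{1/2}$. A useful preliminary observation is the factorization
\begin{equation*}
\frac{|\xi|^2+|\eta|^2}{R^2}\left(1-\frac{|\xi|^2+|\eta|^2}{R^2}\right)_+^z = \left(1-\frac{|\xi|^2+|\eta|^2}{R^2}\right)_+^z-\left(1-\frac{|\xi|^2+|\eta|^2}{R^2}\right)_+^{z+1},
\end{equation*}
which (up to the entire function $2(z+1)$) realizes $\G^z$ as a Littlewood--Paley difference of two bilinear Bochner--Riesz means. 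In particular, the same multiplier decompositions available for $\B^z_R$ are available for the integrand of $\G^z$.

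First I would decompose, exactly as in Section~\ref{sec:keylem1}, the multiplier into pieces $m^z_{j,R}$ ($j\geq 2$) localized where $1-|\xi|^2/R^2\sim 2^{-j}$, together with a low piece $m^z_{0,R}$. Stein's identity then yields, for each $j$, a representation
\begin{equation*}
\G^z_{j,R}(f,g)(x)=c_z\int_0^{\sqrt{2^{-j+1}}} S_{j,\beta}^{R,t}f(x)\, \mathcal L_{Rt}^{z-\beta}g(x)\, t^{2(z-\beta)+1}\,dt,
\end{equation*}
where $\mathcal L_{Rt}^{z-\beta}$ is the linear multiplier built from the Bochner--Riesz means of orders $z-\beta$ and $z-\beta+1$ (reflecting the factorization above). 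Differentiating in $z$ produces the same three summands $I_{j,R}$, $II_{j,R}$, $III_{j,R}$ as before: one from $\partial_z c_z$, one from $\partial_z$ hitting $\mathcal L_{Rt}^{z-\beta}$ (producing operators $\widetilde B^{z-\beta}_{Rt}$ with a logarithmic factor in the multiplier), and one from $\partial_z$ hitting $t^{2(z-\beta)+1}$ (producing a $\log t$).

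To finish, I would integrate the square $|\partial_z\G^z_{j,R}(f,g)|^2$ against $\frac{dR}{R}$, take the square root, then apply Cauchy--Schwarz in $t$ and Minkowski's inequality in the $x$ variable. This reduces everything to two families of $L^2$ estimates: an iterated square-function bound
\begin{equation*}
\left\|\left(\int_0^\infty\!\!\int_0^{\sqrt{2^{-j+1}}}|S^{R,t}_{j,\beta}f(x)t^{2(z-\beta)+1}|^2\,dt\,\frac{dR}{R}\right)^{1/2}\right\|_2\lesssim 2^{j(\frac14-\mathrm{Re}(z)+\gamma)}\|f\|_2,
\end{equation*}
available from the methods of \cite{JS,CKSS}, and an $L^2(\frac{dR}{R})$-square function bound for $B^{z-\beta}_{Rt}g$ and $\widetilde B^{z-\beta}_{Rt}g$ of the form used already in the proof of Lemma~\ref{keylem1}. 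Summing the resulting geometric series in $2^{-j(\mathrm{Re}(z)-\gamma)}$ (valid since $\mathrm{Re}(z)>0$) gives the required bound, with the gamma-function estimates for $c_z$ and $\partial_z c_z$ producing the $e^{\mathfrak C|\mathrm{Im}(z)|^2}$ growth.

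The main obstacle is controlling $II_{j,R}$, which involves $\widetilde B^{z-\beta}_{Rt}g$ carrying the factor $\log(1-|\eta|^2/t^2)_+$. For this I would use the same Stein telescoping trick as in Section~\ref{sec:keylem1}: write $\widetilde B^{\delta}_t=\sum_{k=1}^{d}(\widetilde B^{\delta+k-1}_t-\widetilde B^{\delta+k}_t)+\widetilde B^{\delta+d}_t$, estimate each telescoping difference via Plancherel (absorbing the logarithm into an $\epsilon$-power of $(1-|\eta|^2/t^2)_+$), and dominate the tail by the Hardy--Littlewood maximal function when $\mathrm{Re}(\delta)+d>\frac{n-1}{2}$. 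The low-frequency terms $\G^z_{0,R}$ and $\G^z_{1,R}$ are treated exactly as in Section~\ref{sec:keylem1}, with the pointwise maximal-function domination replaced by its square-function analogue; the remaining cases $2\leq j<K$ are handled by the same Stein-identity trick augmented with the smooth cutoff $\psi_0(|\xi|^2/R^2)$, whose bound by $M$ is automatic.
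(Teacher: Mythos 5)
Your key observation --- that $s(1-s)_+^z=(1-s)_+^z-(1-s)_+^{z+1}$ realizes the square-function integrand $\widehat{\mathcal K^z_R}$ as a Littlewood--Paley difference of two bilinear Bochner--Riesz multipliers --- is correct, and it is genuinely a different strategy from the paper's. The paper does \emph{not} use this factorization at the bilinear level. Instead, after inserting the angular cutoff $\psi(2^j(1-|\xi|^2/R^2))$ and applying Stein's identity exactly as for $\B^z_{j,R}$, the paper splits the extra factor additively, $\frac{|\xi|^2+|\eta|^2}{R^2}=\frac{|\xi|^2}{R^2}+\frac{|\eta|^2}{R^2}$, which produces the two-term representation $\g^z_{j,R}(f,g)=c_z\int_0^{\sqrt{2^{-j+1}}}\bigl[S^{R,t}_{j,\beta}f\cdot A^{z-\beta}_{Rt}g+\tilde S^{R,t}_{j,\beta}f\cdot B^{z-\beta}_{Rt}g\bigr]t^{2(z-\beta)+1}dt$. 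The virtue of this split is that in each summand exactly one of the two factors carries the weight $|\cdot|^2/R^2$, and that is precisely the factor on which the iterated $L^2\bigl(\frac{dR}{R}\times dt\bigr)$ square-function bound is taken (Proposition~\ref{sqf2} and the $\tilde S$-bound from~\cite{CKSS}), while the other factor is controlled by its supremum over $R$ (Theorem~5.1 of~\cite{JS}, Lemma~4.3 of~\cite{JS}, or the estimate~\eqref{L2}). This is a clean ``sup on one, square function on the other'' structure, and the paper carries through the $\log$-producing differentiations ($\tilde A_{Rt}$, $\tilde B_{Rt}$, the $\log t$ term) with the same split.

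Two concrete issues with your write-up. First, the single-term representation you state,
$\G^z_{j,R}(f,g)(x)=c_z\int_0^{\sqrt{2^{-j+1}}} S_{j,\beta}^{R,t}f(x)\, \mathcal L_{Rt}^{z-\beta}g(x)\, t^{2(z-\beta)+1}\,dt$, is not quite right: if you run Stein's identity on the two pieces of orders $z$ and $z+1$ with the \emph{same} auxiliary parameter $\beta$, the resulting $t$-powers differ by $t^2$ and the constants $c_z$, $c_{z+1}$ differ, so you cannot factor out a single $\mathcal L_{Rt}^{z-\beta}g$. If instead you take $\beta$ for order $z$ and $\beta+1$ for order $z+1$ so that both $g$-operators equal $B^{z-\beta}_{Rt}g$, then the combination sits on the $f$-factor (a difference of $S^{R,t}_{j,\beta}$ and $S^{R,t}_{j,\beta+1}$), which is the opposite of what you wrote. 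Either way you obtain a two-term sum, not a single $S\cdot\mathcal L$ product, and you should state that clearly. Second, the iterated bound
$\left\|\left(\int_0^\infty\!\int_0^{\sqrt{2^{-j+1}}}|S^{R,t}_{j,\beta}f\,t^{2(z-\beta)+1}|^2\,dt\,\frac{dR}{R}\right)^{1/2}\right\|_2\lesssim 2^{j(\frac14-\mathrm{Re}(z)+\gamma)}\|f\|_2$
is not literally Theorem~5.1 of~\cite{JS} (that is the maximal estimate $\sup_R$) nor Theorem~3.2 of~\cite{CKSS} (that is for $\tilde S^{R,t}_{j,\beta}$, which carries the extra $|\xi|^2/R^2$). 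It does hold --- the cutoff $\psi(2^j(1-|\xi|^2/R^2))$ localizes $\log R$ to an interval of length $\sim 2^{-j}$ for each $\xi$, and a Plancherel computation gives a bound even a factor $2^{-j/2}$ better than what you quote, while on the support of $\psi$ the factor $|\xi|^2/R^2\approx 1$ so the $\tilde S$-bound transfers --- but you should not present it as directly citable; state the Plancherel computation or the reduction to the $\tilde S$-bound. With these two points addressed, your route is viable, and arguably conceptually cleaner for the $j\geq 2$ pieces; the paper's additive split is what makes the required iterated square-function estimates line up directly with what is already proved in~\cite{JS,CKSS}, without needing the extra step above.
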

\noindent
{\bf Proof of Theorem~\ref{mainthm3} :} Assuming Lemma~\ref{keylem2} we follow the method of proof of Theorem~\ref{mainthm} for $T=\B^{n-\frac{1}{2}}_*$ to deduce the proof of Theorem~\ref{mainthm3}. 

Let $z=\alpha+i\tau\in \C$ and consider the linearized version of the square function 
\begin{equation*}
	\mathcal T^z_b(f,g)(x)=\int_0^\infty\int_{\R^n\times\R^n}\left(1-\frac{|\xi|^2+|\eta|^2}{R^2}\right)^{z}_{+}\frac{|\xi|^2+|\eta|^2}{R^2}\hat{f}(\xi)\hat{g}(\eta)e^{2\pi ix.(\xi+\eta)}d\xi d\eta b(x,R)\frac{dR}{R},
\end{equation*}
where $b(x,R)\in L^2((0,\infty),\frac{dR}{R})$ with $\int_0^\infty|b(x,R)|^2\frac{dR}{R}\leq 1.$ 

As in the proof of Theorem~\ref{mainthm} we let $\epsilon_1,\epsilon_2>0$ and $N\in \mathbb N$ and consider the operator 
$$\tilde{\mathcal {T}}^{z,\epsilon_{1},\epsilon_{2},N}_{b}(f,g)(x)=\mathcal {T}^{(1-z)\epsilon_{1}+z(n-\frac{1}{2}+\epsilon_{2})}_{b}(f,g)(x)(v_{N}(x))^{z}e^{A z^{2}},$$ 
such that $A>\mathfrak{C}$ and $v_{N}(x)$ is defined by
$$ v_{N}(x)= \begin{cases} v_{\omega}(x), & \mbox{if }  v_{\omega}(x)\leq N \\ N, & \mbox{if }v_{\omega}(x)>N. \end{cases}.$$
The proof from this point onwards may be completed imitating the method of proof of Theorem~\ref{mainthm} without any difficulty. We skip the details to avoid repetition. \qed
\section{Proof of Lemma~\ref{keylem2}}
We begin with the decomposition of the multiplier as previously. Also, see~[\cite{CKSS}, Section~3] for more details. We have  
\begin{eqnarray*}
	m_R^{\alpha}(\xi,\eta)
	&=&\left(1-\frac{|\xi|^2+|\eta|^2}{R^2}\right)_+^{\alpha}\frac{|\xi|^2+|\eta|^2}{R^2}
	= \sum\limits_{j\geq 2}{m}^{\alpha}_{j,R}(\xi,\eta)+m^{\alpha}_{0,R}(\xi,\eta),
\end{eqnarray*}
where $$m^{\alpha}_{j,R}(\xi,\eta)=\psi\left(2^j\left(1-\frac{|\xi|^2}{R^2}\right)\right)\left(1-\frac{|\xi|^2}{R^2}\right)_+^{\alpha}\left(1-\frac{|\eta|^2}{R^2}\left(1-\frac{|\xi|^2}{R^2}\right)^{-1}\right)^{\alpha}_+\frac{|\xi|^2+|\eta|^2}{R^2}$$ 
and 
$$m^{\alpha}_{0,R}(\xi,\eta)=\psi_0\left(\frac{|\xi|^2}{R^2}\right)\left(1-\frac{|\xi|^2+|\eta|^2}{R^2}\right)_+^{\alpha}\frac{|\xi|^2+|\eta|^2}{R^2}.$$

Let $\g_R^{\alpha}$ denote the bilinear operator associated with the multiplier $m^{\alpha}_{j,R}(\xi,\eta)$ and $\G_j^{\alpha}$ denote the corresponding bilinear square function. Then, we have 
\begin{eqnarray*}\label{firstdecom}	\G^{\alpha}(f,g)(x)\leq 	\G_0^{\alpha}(f,g)(x)+\sum_{j\geq 2}	\G_j^{\alpha}(f,g)(x).
	\end{eqnarray*}
Following the decomposition of the multiplier $m^{\alpha}_{j,R}(\xi,\eta)$ from~[\cite{CKSS}, equation $(7)$] we can write
\begin{eqnarray*} 
	\g^{\alpha}_{j,R}(f,g)(x)
	&=&c_{\alpha}\int_0^{\sqrt{2^{-j+1}}}(S_{j,\beta}^{R,t}f(x)A_{Rt}^{\delta}g(x)+\tilde{S}_{j,\beta}^{R,t}f(x)B_{Rt}^{\delta}g(x))t^{2\delta+1}dt,
\end{eqnarray*}
where $\beta>\frac{1}{2}$, $\delta>-\frac{1}{2}$ and $\beta+ \delta=\alpha$ and 
\begin{eqnarray*}B_t^{\delta}g(x)=\int_{\R^n}\hat{g}(\eta)\left(1-\frac{|\eta|^2}{t^2}\right)^{\delta}_+e^{2\pi ix\cdot\eta} d\eta,
\end{eqnarray*}

\begin{eqnarray*}A_t^{\delta}g(x)=\int_{\R^n}\hat{g}(\eta)\frac{|\eta|^2}{t^2}\left(1-\frac{|\eta|^2}{t^2}\right)^{\delta}_+e^{2\pi ix\cdot\eta} d\eta,
\end{eqnarray*}
$${S}_{j,\beta}^{R,t}f(x)=\int_{\R^n}\psi\left(2^j\left(1-\frac{|\xi|^2}{R^2}\right)\right)\left(1-\frac{|\xi|^2}{R^2}-t^2\right)_+^{\beta-1}\hat{f}(\xi)e^{2\pi ix\cdot\xi}d\xi,$$
and
$$\tilde{S}_{j,\beta}^{R,t}f(x)=\int_{\R^n}\psi\left(2^j\left(1-\frac{|\xi|^2}{R^2}\right)\right)\frac{|\xi|^2}{R^2}\left(1-\frac{|\xi|^2}{R^2}-t^2\right)_+^{\beta-1}\hat{f}(\xi)e^{2\pi ix\cdot\xi}d\xi.$$
The same decomposition can be preformed for the multiplier with complex exponent. This gives us the following representation of  $g^{z}_{j,R}(f,g)(x)$ for $z\in \C$  with $0<Re(z)<n-\frac{1}{2}$
\begin{eqnarray*}\label{decomope1}
    \g^{z}_{j,R}(f,g)(x)
	&=& c_{z}\int_0^{\sqrt{2^{-j+1}}}[S_{j,\beta}^{R,t}f(x)A_{Rt}^{z-\beta}g(x)+\tilde{S}_{j,\beta}^{R,t}f(x)B_{Rt}^{z-\beta}g(x)]t^{2(z-\beta)+1}dt,
\end{eqnarray*}
where $\beta>\frac{1}{2}, \text{Re}(z)-\beta>-\frac{1}{2}$ and $c_z=\frac{\Gamma(z+1)}{\Gamma(\beta)\Gamma(z-\beta+1)}$, see \cite[page $279$]{SW} for the precise form of the constant $c_z$. 
\subsection*{Boundedness of $\G^{\alpha}_j, j\geq 2$:} 
The derivative of $\mathcal \g^{z}_{j,R}(f,g)(x)$ is given by 
\begin{eqnarray*}
    \left(\partial_z\right) \mathcal \g^{z}_{j,R}(f,g)(x)
    &=& \left(\partial_zc_{z}\right)\int_0^{\sqrt{2^{-j+1}}}[S_{j,\beta}^{R,t}f(x)A_{Rt}^{z-\beta}g(x)+\tilde{S}_{j,\beta}^{R,t}f(x)B_{Rt}^{z-\beta}g(x)]t^{2(z-\beta)+1}dt\\
    &&~+ c_{z}\int_0^{\sqrt{2^{-j+1}}}[S_{j,\beta}^{R,t}f(x)\tilde{A}_{Rt}^{z-\beta}g(x)+\tilde{S}_{j,\beta}^{R,t}f(x)\tilde{B}_{Rt}^{z-\beta}g(x)]t^{2(z-\beta)+1}dt\\
    &&~+ c_{z}\int_0^{\sqrt{2^{-j+1}}}[S_{j,\beta}^{R,t}f(x)A_{Rt}^{z-\beta}g(x)+\tilde{S}_{j,\beta}^{R,t}f(x)B_{Rt}^{z-\beta}g(x)]t^{2(z-\beta)+1}\log tdt\\
    &=& I_{j,R}+II_{j,R}+III_{j,R}
\end{eqnarray*}
where  $$\tilde{B}_t^{z-\beta}g(x)=\int_{\R^n}\hat{g}(\eta)\left(1-\frac{|\eta|^2}{t^2}\right)_+^{z-\beta}\log\left(1-\frac{|\eta|^2}{t^2}\right)_+ e^{2\pi ix.\eta} d\eta$$ and $$\tilde{A}_t^{z-\beta}g(x)=\int_{\R^n}\hat{g}(\eta)\left(1-\frac{|\eta|^2}{t^2}\right)_+^{z-\beta}\frac{|\eta|^2}{t^2}\log\left(1-\frac{|\eta|^2}{t^2}\right)_+ e^{2\pi ix.\eta} d\eta.$$ 

From the proof of Lemma~\ref{keylem1} we know that constants $|c_z|$ and $|\partial_zc_z|$ increase at most by a constant multiple of $e^{\mathfrak{C}\tau^2}$, where $\mathfrak{C}>0$ is a fixed constant. 

Next, we need to prove the desired estimates for each of the three square functions associated with quantities $I_{j,R}, II_{j,R}$ and $III_{j,R}.$ The proof of these estimates may be completed following the scheme of proof for the operator $\partial_z \mathcal {B}^{z}_{j,R}(f,g)$ as in Section~\ref{sec:keylem1}. Of course, we will have to make minor modifications in the arguments, but this part can be completed without much difficulty imitating the proof of its counterpart in Lemma~\ref{keylem1}. In doing so we will require [\cite{JS}, Theorem 5.1], [\cite{CKSS}, Theorem 3.2] and the following proposition. 
\begin{proposition}\label{sqf2}
The operator
$$g\to\left(\int_0^\infty\int_{0}^{\sqrt{2^{-j+1}}}|\tilde{A}_{Rt}^{\delta}g(x)|^2dt\frac{dR}{R}\right)^{\frac{1}{2}}$$
is bounded on $L^2(\R^n)$ for $\text{Re}(\delta)>-\frac{1}{2}$
\end{proposition}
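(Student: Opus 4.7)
The plan is to exploit the Fourier representation of $\tilde{A}^{\delta}_{s}$ and apply Plancherel's theorem to reduce the claim to the convergence of a one-dimensional integral with a logarithmic singularity. First, by Fubini together with the substitution $s = Rt$ (with $t$ fixed), under which $\frac{dR}{R} = \frac{ds}{s}$, one has
\begin{eqnarray*}
\int_0^\infty \int_0^{\sqrt{2^{-j+1}}} |\tilde{A}_{Rt}^\delta g(x)|^2 \, dt \, \frac{dR}{R} = \sqrt{2^{-j+1}} \int_0^\infty |\tilde{A}_s^\delta g(x)|^2 \, \frac{ds}{s}.
\end{eqnarray*}
Thus the proof reduces to the $L^2$-boundedness of the continuous square function $g \mapsto \left(\int_0^\infty |\tilde{A}_s^\delta g|^2 \frac{ds}{s}\right)^{1/2}$, and in fact with an additional decay factor $2^{-j/2}$ that will be useful when summing in $j$ later.

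Next I would apply Plancherel and Fubini to rewrite
\begin{eqnarray*}
\int_{\R^n} \int_0^\infty |\tilde{A}_s^\delta g(x)|^2 \frac{ds}{s} dx = \int_{\R^n} |\hat g(\eta)|^2 \int_0^\infty \left|\left(1-\frac{|\eta|^2}{s^2}\right)_+^\delta \frac{|\eta|^2}{s^2} \log\left(1-\frac{|\eta|^2}{s^2}\right)_+\right|^2 \frac{ds}{s} \, d\eta.
\end{eqnarray*}
The substitution $u = |\eta|/s$ (which leaves $\frac{ds}{s}$ invariant up to a sign) reveals that the inner integral equals the $\eta$-independent quantity
\begin{eqnarray*}
C_\delta = \int_0^1 u^3 (1-u^2)^{2\,\text{Re}(\delta)} |\log(1-u^2)|^2 \, du.
\end{eqnarray*}
Provided $C_\delta < \infty$, Plancherel then yields the desired bound with constant $C_\delta^{1/2} \sqrt{2^{-j+1}}$.

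The only subtlety is verifying $C_\delta < \infty$. Integrability at $u=0$ is automatic thanks to the $u^3$ factor, so the only question is near $u=1$, where $(1-u^2)^{2\,\text{Re}(\delta)}$ may blow up. Here one absorbs the logarithm into an arbitrarily small power: for any $\epsilon > 0$ small enough that $2\,\text{Re}(\delta) - \epsilon > -1$ (possible by the hypothesis $\text{Re}(\delta) > -\frac{1}{2}$), the pointwise bound $|\log(1-u^2)|^2 \lesssim_\epsilon (1-u^2)^{-\epsilon}$ reduces the matter to integrability of $(1-u^2)^{2\,\text{Re}(\delta) - \epsilon}$ near $u = 1$, which is immediate. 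This is exactly the same \emph{logarithm versus power} trick that was used to control $\tilde{B}_t^\delta$ in the proof of \eqref{L2}, so no new idea is required; the main point is simply that the extra factor $\frac{|\eta|^2}{s^2}$ present in $\tilde{A}_s^\delta$ contributes only a $u^2$ in the integrand after the substitution, and therefore does not obstruct anything.
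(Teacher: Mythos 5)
Your proof is correct and takes essentially the same route as the paper: Fubini together with the scale invariance of $\frac{dR}{R}$ reduce to the continuous square function $g\mapsto\big(\int_0^\infty|\tilde A_s^\delta g|^2\,\frac{ds}{s}\big)^{1/2}$, which is then bounded via Plancherel plus the familiar trick of absorbing $|\log(1-u^2)|^2$ into a small power of $(1-u^2)$, possible because $\mathrm{Re}(\delta)>-\frac12$. The only superficial difference is that the paper's displayed computation carries an extra factor $t^4$ inside the $t$-integral (yielding decay $2^{\frac{5}{4}(-j+1)}$ rather than your $2^{\frac{1}{2}(-j+1)}$), apparently importing the $t$-weight from the ambient decomposition rather than the proposition as literally stated; the analytic content of the argument is otherwise identical to yours.
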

\begin{proof}
Consider 
\begin{eqnarray*}
    \left\|\left[\int_0^\infty\left(\int_{0}^{\sqrt{2^{-j+1}}}|\tilde{A}_{Rt}^{\delta}g(x)|^2dt\right)\frac{dR}{R}\right]^{\frac{1}{2}}\right\|_2 
	&=& \left\|\left[\int_{0}^{\sqrt{2^{-j+1}}}\left(\int_0^\infty|\tilde{A}_{R}^{\delta}g(x)|^2\frac{dR}{R}\right)t^4dt\right]^{\frac{1}{2}}\right\|_2\\
	&\lesssim& 2^{\frac{5}{4}(-j+1)}\left\|\left[\int_0^\infty|\tilde{A}_{R}^{\delta}g(x)|^2\frac{dR}{R}\right]^{\frac{1}{2}}\right\|_2.
\end{eqnarray*}

Use Plancherel's theorem to deduce that 
\begin{eqnarray*}
\left\|\left(\int_0^{\infty}|\tilde{A}_R^{\delta}g(x)|^2 \frac{dR}{R}\right)^{1/2}\right\|_2^2&=&\int_0^{\infty}\int_{\R^n}\left|\left(1-\frac{|\eta|^2}{R^2}\right)_+^{\delta}\frac{|\eta|^2}{R^2}\log\left(1-\frac{|\eta|^2}{t^2}\right)_+\hat{g}(\eta)\right|^2dx\frac{dR}{R}\\
&\lesssim & \int_0^{\infty}\int_{\R^n}\left|\left(1-\frac{|\eta|^2}{R^2}\right)_+^{\delta+k-\epsilon}\frac{|\eta|^2}{R^2}\hat{g}(\eta)\right|^2dx \frac{dR}{R}\\
%&=&c'\|G^{\delta+k-\epsilon}(g)\|_2^2\\
&\lesssim& \|g\|_2^2
\end{eqnarray*}
where $\epsilon>0$ is small enough so that $\delta-\epsilon>-\frac{1}{2}$.
\end{proof}
\noindent {\bf Boundedness for $\G_0^\alpha(f,g)$:} Finally, we need to prove $L^2\times L^2\to L^1$-boundedness of the square function $\G_0^\alpha(f,g)$. 
%We decompose the multiplier $m_{0,R}^\alpha(\xi,\eta)$ using the partition of unity in the $\eta$ variable same as we did earlier in the $\xi$ variable. We get that 
%\begin{align*} m_{0,R}^\alpha(\xi,\eta)=\sum_{j\geq 2}\psi_0\left(\frac{|\xi|^2}{R^2}\right)\psi\left(2^j\left(1-\frac{|\eta|^2}{R^2}\right)\right)\frac{|\xi|^2+|\eta|^2}{R^2}\left(1-\frac{|\eta|^2}{R^2}\right)_+^{\alpha}\left(1-\frac{|\xi|^2}{R^2}\left(1-\frac{|\eta|^2}{R^2}\right)^{-1}\right)^{\alpha}_+\nonumber \\+\psi_0\left(\frac{|\xi|^2}{R^2}\right)\psi_0\left(\frac{|\eta|^2}{R^2}\right)\frac{|\xi|^2+|\eta|^2}{R^2}\left(1-\frac{|\xi|^2+|\eta|^2}{R^2}\right)_+^{\alpha}.
%\end{align*}
%
%Since the support of the function $\psi_0$ is contained in the set $[0,3/4]$, the function \\$\psi_0\left(\frac{|\xi|^2}{R^2}\right)\psi_0\left(\frac{|\eta|^2}{R^2}\right)\frac{|\xi|^2+|\eta|^2}{R^2}\left(1-\frac{|\xi|^2+|\eta|^2}{R^2}\right)_+^{\alpha}$ is not smooth. So, we further split $\psi_0(x)=\psi_0^1(x)+\psi_0^2(x)$ such that support of $\psi_0^1$ is contained in the interval $[0,3/16]$ and support of $\psi_0^2$ is contained in the interval $[\frac{3}{32},\frac{3}{4}]$. We get the following decomposition of $m_{0,R}^\alpha$,
As earlier we decompose the multiplier in the following manner. 
\begin{eqnarray*}
m_{0,R}^\alpha(\xi,\eta)&=&\sum_{j\geq0}\Tilde{m}_{j,R}^\alpha(\xi,\eta)
\end{eqnarray*}
where
$$\Tilde{m}_{j,R}^\alpha(\xi,\eta)=\psi_0\left(\frac{|\xi|^2}{R^2}\right)\psi\left(2^j\left(1-\frac{|\eta|^2}{R^2}\right)\right)\frac{|\xi|^2+|\eta|^2}{R^2}\left(1-\frac{|\eta|^2}{R^2}\right)_+^{\alpha}\left(1-\frac{|\xi|^2}{R^2}\left(1-\frac{|\eta|^2}{R^2}\right)^{-1}\right)^{\alpha}_+$$
for $j\geq2$, and for $j=0,1$
$$\Tilde{m}_{j,R}^\alpha(\xi,\eta)=\psi_0\left(\frac{|\xi|^2}{R^2}\right)\psi_0^{j+1}\left(\frac{|\eta|^2}{R^2}\right)\frac{|\xi|^2+|\eta|^2}{R^2}\left(1-\frac{|\xi|^2+|\eta|^2}{R^2}\right)_+^{\alpha}.$$

Let $\tilde{\g}^{\alpha}_{j,R}$ denote the bilinear multiplier operator associated with $\tilde{m}^{\alpha}_{j,R}(\xi,\eta)$ and let $\tilde{\G}^{\alpha}_{j}$ be the square function corresponding to $\tilde{\g}^{\alpha}_{j,R}$. 

Note that for $j=0$ the multiplier 
$M(\xi,\eta)=\psi_0\left(\frac{|\xi|^2}{R^2}\right)\psi_0^1\left(\frac{|\eta|^2}{R^2}\right)\left(1-\frac{|\xi|^2+|\eta|^2}{R^2}\right)_+^{z}\frac{|\xi|^2+|\eta|^2}{R^2}$ $\log\left(1-\frac{|\xi|^2+|\eta|^2}{R^2}\right)_+$
is a smooth function with its support in a ball of radius $\sqrt{\frac{15}{16}}R$. Consequently, this part can be dominated by the bilinear Hardy-Littlewood maximal function and the desired estimate follows. 

When $j=1$ similar to the expression~\eqref{j=1} we get that  
\begin{eqnarray*}
	\partial_z\left( \tilde{\g}^{z}_{1,R}(f,g)(x)\right)&=& \left(\partial_zc_{z}\right)\int_0^{u}[H^{\beta}_{R,t}g(x)A_{Rt}^{z-\beta}f(x)+\tilde{H}^{\beta}_{R,t}g(x)B_{Rt}^{z-\beta}f(x)]t^{2(z-\beta)+1}dt\\
	&&~+ c_{z}\int_0^{u}[H^{\beta}_{R,t}g(x)\tilde{A}_{Rt}^{z-\beta}f(x)+\tilde{H}^{\beta}_{R,t}g(x)\tilde{B}_{Rt}^{z-\beta}f(x)t]^{2(z-\beta)+1}dt\\
	&&~+ c_{z}\int_0^{u}[H^{\beta}_{R,t}g(x)A_{Rt}^{z-\beta}f(x)+\tilde{H}^{\beta}_{R,t}g(x)B_{Rt}^{z-\beta}f(x)]t^{2(z-\beta)+1}\log tdt
\end{eqnarray*}
where $u=\sqrt{\frac{29}{32}}$,   $$H_{R,t}^{\beta}g(x)=\int_{\R^n}\psi_0^2\left(\frac{|\eta|^2}{R^2}\right)\left(1-t^2-\frac{|\eta|^2}{R^2}\right)_+^{\beta-1}\hat{g}(\eta)e^{2\pi ix.\eta} d\eta,$$ and
$$\tilde{H}_{R,t}^{\beta}g(x)=\int_{\R^n}\psi_0^2\left(\frac{|\eta|^2}{R^2}\right)\left(1-t^2-\frac{|\eta|^2}{R^2}\right)_+^{\beta-1}\frac{|\eta|^2}{R^2}\hat{g}(\eta)e^{2\pi ix.\eta} d\eta.$$
It follows that the bilinear square function associated to each of the terms in the equation above is  bounded from $L^2\times L^2$ into $L^1$. Here we need to use $L^2$-boundedness of operators  $g\rightarrow \left(\sup_{R>0}\int_0^{u}|H^{\beta}_{R,t}g(x)t^{2\delta+1}|^2 dt\right)^{1/2}$ and $g\rightarrow \left(\int_0^\infty\int_0^{u}|\tilde{H}^{\beta}_{R,t}g(x)t^{2\delta+1}|^2 dt\frac{dR}{R}\right)^{1/2}$ from [\cite{JS}, Section 5.3] and [\cite{CKSS}, page 16] respectively.

Finally, consider the case $j\geq2$. Let $K\in \N$ be such that $\{\xi: (\xi,\eta)\in \supp(\tilde{m}^{\alpha}_{j,R})\}\subseteq \{\xi:|\xi|\leq \frac{R}{8}\}$ for all $j\geq K$. As earlier we may  assume that $\psi_0\left(\frac{|\xi|^2}{R^2}\right)= 1$ for all $|\xi|\leq \frac{R}{8}.$ Therefore, for $j\geq K$ the square function $\tilde{\G}^{\alpha}_{j}$ can be dealt with exactly the same way as $\G^{\alpha}_{j}$. For the remaining terms, i.e., for $2\leq j<K$ using the approach similar to the expression \eqref{decomope1}, we get that 
\begin{eqnarray*}
    \tilde{\g}^{z}_{j,R}(f,g)(x)
	&=& c_{z}\int_0^{\sqrt{2^{-j+1}}}[S_{j,\beta}^{R,t}g(x)B^{\psi_0}_{R}A_{Rt}^{z-\beta}f(x)+\tilde{S}_{j,\beta}^{R,t}g(x)B^{\psi_0}_{R}B_{Rt}^{z-\beta}f(x)]t^{2(z-\beta)+1}dt,
\end{eqnarray*}
where $c_z=\frac{\Gamma(z+1)}{\Gamma(\beta)\Gamma(z-\beta+1)}$.

Therefore, we see that the derivative is given by 
\begin{eqnarray*}
    \partial_z \left(\tilde{\g}^{z}_{j,R}(f,g)(x)\right)
    &=& \left(\partial_zc_{z}\right)\int_0^{\sqrt{2^{-j+1}}}[S_{j,\beta}^{R,t}g(x)B^{\psi_0}_{R}A_{Rt}^{z-\beta}f(x)+\tilde{S}_{j,\beta}^{R,t}g(x)B^{\psi_0}_{R}B_{Rt}^{z-\beta}f(x)]t^{2(z-\beta)+1}dt\\
    &&~+ c_{z}\int_0^{\sqrt{2^{-j+1}}}[S_{j,\beta}^{R,t}g(x)B^{\psi_0}_{R}\tilde{A}_{Rt}^{z-\beta}f(x)+\tilde{S}_{j,\beta}^{R,t}g(x)B^{\psi_0}_{R}\tilde{B}_{Rt}^{z-\beta}f(x)]t^{2(z-\beta)+1}dt\\
    &&~+ c_{z}\int_0^{\sqrt{2^{-j+1}}}[S_{j,\beta}^{R,t}g(x)B^{\psi_0}_{R}A_{Rt}^{z-\beta}f(x)+\tilde{S}_{j,\beta}^{R,t}g(x)B^{\psi_0}_{R}B_{Rt}^{z-\beta}f(x)]t^{2(z-\beta)+1}\log tdt\\
    &=& \tilde{I}_{j,R}+\tilde{II}_{j,R}+\tilde{III}_{j,R}
\end{eqnarray*}
The $L^2\times L^2\to L^1$ estimates for the square functions corresponding to the expressions $\tilde{I}_{j,R}$ and $\tilde{III}_{j,R}$ can be obtained similarly as for the operator $\G_{j}^z$. We will need to make use of $L^2$-boundedness of $f\rightarrow \left(\sup_{R>0}R_j^{-1}\int_0^{R_j}|B^{\psi_0}_{R}B_t^{\delta}f(x)|^2 dt\right)^{1/2}$ from [\cite{JS}, page 24] and the operator $f\rightarrow \left(\int_0^{\sqrt{2^{-j+1}}}\int_0^\infty|B^{\psi_0}_{R}A_{Rt}^{\delta}f(x)|^2 dt\right)^{1/2}$ from [\cite{CKSS}, page 15]. 

Finally, the square function for the second expression $\tilde{II}_{j,R}$ is dealt with as follows. 

\begin{eqnarray*} 
    &&	\left\|\left(\int_0^\infty|\tilde{II}_{j,R}|^2\frac{dR}{R}\right)^{\frac{1}{2}}\right\|_1\\
	&\lesssim& |c_{z}|\left\|\sup_{R>0}\left(\int_{0}^{\sqrt{2^{-j+1}}}|S_{j,\beta}^{R,t}g(\cdot)t^{2\delta+1}|^2dt\right)^{\frac{1}{2}}\right\|_2  \left\|\left[\int_0^\infty\left(\int_{0}^{\sqrt{2^{-j+1}}}|B^{\psi_0}_{R}\tilde{A}_{Rt}^{\delta}f(x)|^2dt\right)\frac{dR}{R}\right]^{\frac{1}{2}}\right\|_2\\
	&&~+ 2^{-\frac{j}{4}}|c_{z}|\left\|\left(\int_0^\infty\int_{0}^{\sqrt{2^{-j+1}}}|\tilde{S}_{j,\beta}^{R,t}g(\cdot)t^{2\delta+1}|^2dt\frac{dR}{R}\right)^{\frac{1}{2}}\right\|_2 \left\|\sup_{R>0}\left(\frac{1}{R_j}\int_0^{R_j}|B^{\psi_0}_{R}\tilde{B}_t^{\delta}f(x)|^2dt\right)^{\frac{1}{2}}\right\|_2
\end{eqnarray*}
Recall that the operator $f\rightarrow \left(\sup_{R>0}R_j^{-1}\int_0^{R_j}|B^{\psi_0}_{R}\tilde{B}_t^{\delta}f(x)|^2 dt\right)^{1/2}$ satisfies the desired $L^2$ estimates, see  equation~\eqref{L21}.  Also, the $L^2$ estimates for  $f\rightarrow \sup_{R>0}\left(\int_{0}^{\sqrt{2^{-j+1}}}|S_{j,\beta}^{R,t}g(\cdot)t^{2\delta+1}|^2dt\right)^{\frac{1}{2}}$ and $f\rightarrow \left(\int_0^\infty\int_{0}^{\sqrt{2^{-j+1}}}|\tilde{S}_{j,\beta}^{R,t}g(\cdot)t^{2\delta+1}|^2dt\frac{dR}{R}\right)^{\frac{1}{2}}$ are known from [\cite{JS}, Theorem 5.1] and [\cite{CKSS}, Theorem 3.2] respectively. Therefore,  in order to conclude the $L^2\times L^2\to L^1$ boundedness of the square function we only need to prove the following estimate. 
\begin{eqnarray}\label{L22}
	\left\|\left(\int_0^\infty\int_{0}^{\sqrt{2^{-j+1}}}|B^{\psi_0}_{R}\tilde{A}_{Rt}^{\delta}f(\cdot)|^2dt\frac{dR}{R}\right)^{\frac{1}{2}}\right\|_2\lesssim 2^{\frac{5}{4}(-j+1)}\|f\|_2~\text{for}~ \text{Re}(\delta)>-\frac{1}{2}\end{eqnarray}

	Consider 
\begin{eqnarray*}
    \left(\int_0^\infty\int_{0}^{\sqrt{2^{-j+1}}}|B^{\psi_0}_{R}\tilde{A}_{Rt}^{\delta}f(x)|^2dt\frac{dR}{R}\right)^{\frac{1}{2}}
    &=&\left(\int_{0}^{\sqrt{2^{-j+1}}}\int_0^\infty|B^{\psi_0}_{R}\tilde{A}_{Rt}^{\delta}f(x)|^2\frac{dR}{R}dt\right)^{\frac{1}{2}}\\
	&\lesssim& 2^{\frac{5}{4}(-j+1)}\left(\int_0^\infty|B^{\psi_0}_{R}\tilde{A}_{Rt}^{\delta}f(x)|^2\frac{dR}{R}\right)^{\frac{1}{2}}\\
&\lesssim& 2^{\frac{5}{4}(-j+1)}	\left(\int_0^{\infty}|M(\tilde{A}_t^{\delta}f(x)|^2 t^{-1}dt\right)^{1/2}
\end{eqnarray*}
Here we have used that $\sup_{R>0}|B^{\psi_0}_{R}f(x)|\lesssim  M(f)(x)$. Finally,  
invoking vector-valued estimates for the Hardy-Littlewood Maximal function, see~\cite{DK, FS}, and $L^2$ estimate for the square function corresponding to $\tilde{A}_{t}^{\delta}$ we get the desired estimate~\eqref{L22}. 
This completes the proof of the key Lemma~\ref{keylem2}.  
\qed 
\section*{Acknowledgement}  The first author is supported by CSIR (NET), file no. 09/1020(0182)/2019-EMR-I. The second author acknowledges the support from Science and Engineering Research Board, Department of Science and  Technology, Govt. of India under the scheme Core Research Grant with file no. CRG/2021/000230.

\end{document}